 \newtheorem{theorem}{Theorem}[section]
 \newtheorem{lemma}[theorem]{Lemma}
 \theoremstyle{definition}
 \newtheorem{assumption}[theorem]{Assumption}
 \title{Speed of extinction for CSBP in a subcritical L\'evy environment:  strongly and intermediate cases}
 \author{Natalia Cardona-Tob\'on and Juan Carlos Pardo}
\begin{document}
 	
 	\begin{center}
 		{\Large \bf Speed of extinction for continuous state branching processes in subcritical L\'evy environments: \\[3mm]  the strongly and intermediate regimes}\\[5mm]
 		
 		\vspace{0.7cm}
 		\textsc{Natalia Cardona-Tob\'on\footnote{Institute of Mathematical Stochastics, University of Göttingen. Goldschmidtstrasse 7, 37077 Göttingen, Germany,  {\tt natalia.cardonatobon@uni-goettingen.de}}}  and \textsc{Juan Carlos Pardo\footnote{Centro de Investigación en Matemáticas. Calle Jalisco s/n. C.P. 36240, Guanajuato, México, {\tt jcpardo@cimat.mx}}}

 	\end{center}
 	
 	\vspace{0.3cm}

 	\begin{abstract}
 		\noindent 
 		In this paper, we study  the  speed  of extinction of continuous state branching processes in subcritical L\'evy environments.  More precisely, when the associated L\'evy  process to the environment drifts to $-\infty$ and, under a suitable exponential martingale  change of measure  (Esscher transform), the environment either drifts to $-\infty$ or oscillates.  We extend recent results of Palau et al. \cite{palau2016asymptotic} and Li and Xu \cite{li2018asymptotic}, where the branching term is associated to a  spectrally positive stable L\'evy process  and complement the recent article of Bansaye et al. \cite{bansaye2021extinction} where the critical case was studied. Our methodology combines a path analysis of the branching process together with its L\'evy environment,   fluctuation theory for L\'evy processes and  the asymptotic behaviour of exponential functionals of L\'evy processes. As an application of the aforementioned results, we characterise the process conditioned to survival also known as the $Q$-process.

 		\par\medskip
 		\footnotesize
 		\noindent{\emph{2020 Mathematics Subject Classification}:}
 		60J80, 60G51, 60H10, 60K37
 		\par\medskip
 		\noindent{\emph{Keywords:} Continuous state branching processes; immigration, $Q$-process, L\'evy processes; L\'evy processes conditioned to stay positive; random environment; long-term behaviour; extinction.} 
 	\end{abstract}

 	\section{Introduction and main results}
 	
 	Continuous state branching processes in random environments (or CBPREs for short) are the continuous analogue, in time and space,  of Galton-Watson processes in random environments (or GWREs for short).  Roughly speaking, a process in this class is a time-inhomogeneous Markov process taking values in $[0,\infty]$, where $0$ and 
 	$\infty$ are absorbing states, satisfying the quenched branching property; that is  conditionally on the environment,  the process started from $x+y$ is distributed as the  sum of two independent copies of the same process but issued from $x$ and $y$, respectively.
 	
 	CBPREs  provide a richer class of branching models which take into account the effect of the environment on demographic parameters and letting new phenomena 
 	appear.  In particular, the  classification of the asymptotic behaviour of rare events, such as the survival  and explosion probabilities, is much more complex than the case when the environment is fixed since it may combine environmental and demographical stochasticities. Another interesting feature of CBPREs is that they  also appear as scaling limits of GWREs,  a very rich family of   population models; see for instance  Kurtz \cite{kurtz1978diffusion} where the  continuous path setting is considered and  Bansaye and Simatos \cite{bansaye2015scaling} and Bansaye et al. \cite{bansaye2021extinction}  where  different classes of processes in random environment are studied including CBPREs.   
 	
 	An interesting  family of CBPREs arises  when we consider discrete 
 	population models in i.i.d. environments (see for instance \cite{bansaye2021extinction, bansaye2015scaling, boeinghoff2011branching}). The scaling limit of such population models in i.i.d. environments can be characterised by a stochastic differential equation whose  linear term is driven by a L\'evy process which captures the effect of the environment. 
 	This family of processes is known as 
 	\emph{continuous state branching processes in L\'evy environment} (or CBLEs for short) and 
 	their construction have been given  by He et al. 
 	\cite{he2018continuous} and by Palau and Pardo \cite{palau2018branching}, independently, as the unique non-negative strong solution of a stochastic differential equation which will  be specified below.

 	The study of the long-term behaviour  of  CBLEs  has attracted considerable attention in the last decade due to the interesting properties exhibited by these processes, such as an extra phase transition for the extinction probability in the subcritical regime. A list of key papers includes  Bansaye et al. \cite{bansaye2013extinction}, B\"oinghoff and Hutzenthaler  \cite{boeinghoff2011branching}, He et al.  \cite{he2018continuous}, Palau and Pardo \cite{palau2017continuous, palau2018branching}, Palau et al. \cite{palau2016asymptotic} and Xu \cite{xu2021}. In all these manuscripts, the speed of extinction has been computed for the case 
 	where the associated branching mechanism is either stable  or Gaussian, since the  survival probability can be computed explicitly in terms of  exponential functionals of  L\'evy processes.
 	More precisely,  B\"oinghoff and Hutzenthaler  \cite{boeinghoff2011branching} and Palau and Pardo \cite{palau2017continuous} have studied the case when the random environment is driven by a Brownian motion with drift and when the branching term is given by a Feller diffusion and a stable continuous state branching process, respectively.  Both studies exploited the explicit knowledge of the density of the exponential functional of a Brownian motion with drift. Bansaye et al. \cite{bansaye2013extinction} determined the long-term behaviour for stable branching mechanisms where the 
 		random environment is driven by a L\'evy process with bounded variation paths. The case when the environment is driven by a general L\'evy process satisfying some exponential moments  and the branching mechanism is stable was treated, independently,  by Li and Xu \cite{li2018asymptotic} and 
 		Palau et al. \cite{palau2016asymptotic}. Moreover, the results for the critical regime in the aforementioned two articles can be  extended to the case when the L\'evy environment has not finite second moment  and satisfies the so-called Spitzer's condition (see Theorem 2.20  and Remark 2.21 in Patie and Savov \cite{patie2018bernstein}). More recently, Xu \cite{xu2021} give an exact description for the speed of the extinction probability for CBLEs  with stable branching mechanism and where the L\'evy environment is heavy-tailed.

 Little is known about the long-term behaviour of CBLEs when the associated branching mechanism is neither stable nor Gaussian. Up to our knowledge, the only study in this direction is the recent paper by Bansaye et al.  \cite{bansaye2021extinction}, where  the speed of extinction of  critical CBLEs 
 for more general branching mechanisms  was studied.  More precisely, the authors in  \cite{bansaye2021extinction} considered the case when the associated L\'evy process in  the environmental term  satisfies the so-called Spitzer's condition and relax the assumption that the branching mechanism is  stable. The strategy of their proof relies on the 
 		description of the extinction event under favorable environments, or in other words that   the  running infimum of the environment is positive, and the explicit behaviour of the exponential functional of L\'evy processes under Spitzer's condition given in \cite{patie2018bernstein}.
 		
 		In this article, we are interested in understanding the asymptotic behaviour of the survival probability for CBLEs in the subcritical regime for a more general class of  branching mechanisms rather than the stable case. Recall that in the subcritical regime, the underlying  L\'evy process drifts to $-\infty$. Moreover, as  it was observed in Li and Xu \cite{li2018asymptotic} and  Palau et al. \cite{palau2016asymptotic} and in the discrete case by Afanasyev \cite{afanasyev1980} and Dekking \cite{dekking1987}, there is another phase transition in the subcritical regime. These sub-regimes are known in the literature as:  \textit{strongly}, \textit{intermediate} and \textit{weakly subcritical regimes}, respectively (see e.g.  Theorem 5.1 in \cite{li2018asymptotic} or  Proposition 2.2 in \cite{palau2016asymptotic}). The main contribution of this paper is to provide the precise asymptotic behaviour of the survival probability  in the intermediate and strongly subcritical regimes, under some general assumptions on the L\'evy process associated to the environment and the branching mechanism. Furthermore, we apply our main results to  describe  CBLEs conditioned on survival, also known as $Q$-processes, and we identify them  as CBLEs with immigration  (see for instance Theorem 5.3 in \cite{he2018continuous}, Theorem 1 in \cite{palau2017continuous} or below for a proper definition of the aforementioned class of processes).
 		
 		In the strongly subcritical regime, we deduce that the survival probability  decays exponentially with the same exponential rate as the expected population size (up to a multiplicative constant which is proportional to the initial population size).  The key point in our arguments is to rewrite the probability of  survival under a suitable change of measure which is associated to an exponential martingale of the L\'evy environment. In order to do so, the existence of some exponential moments for the L\'evy environment is required. Under this exponential change of measure, the L\'evy environment remains in the subcritical regime, however the probability of survival now can be related to the Laplace transform of a  CBLRE with immigration. In order to characterise the limit of the survival probability,  we require the so-called Grey's condition which guarantees that the process is absorbed at 0 a.s.(see Corollary 4.4 in \cite{he2018continuous}) and  the characterisation of the Laplace transform of the aforementioned  CBLRE with immigration in terms of an extension of the original environment  to an homogeneous L\'evy process indexed in the real line (see equation 5.6 in \cite{he2018continuous}). The latter characterisation was used in \cite{he2018continuous} to study  the stationary  distribution  of CBLREs with immigration and requires  a classical $x\log x$-moment condition on the L\'evy measure associated to the branching mechanism.  Thus, in order to guarantee the positivity of the limiting coefficient in our result, the $x\log x$-moment condition on the L\'evy measure associated to the branching mechanism is required together with  a $1/x$-moment condition 	on the stationary  distribution of the CBLRE with immigration that appears in the probability of survival.
 		
 		For the intermediate subcritical regime, we obtain that the speed of the survival probability  decays exponentially with a polynomial factor of order ${1/2}$ (up to a multiplicative constant which is proportional to the initial population size). In order to deduce our second main result, we combine  the approach developed in  \cite{afanasyev2014conditional, geiger2003limit}, for the discrete time setting, with  fluctuation theory of L\'evy processes. Similarly as in the strongly subcritical regime, we use an exponential change of measure under which the CBLE now oscillates. In other words, the latter observation allow us to follow a similar strategy developed by Bansaye et al. in \cite{bansaye2021extinction} to study the extinction rate for CBLEs  in the critical regime. More precisely, under this new measure, we split the event of survival in two parts, that is when the running supremum is either negative or positive; and then we show that only paths of the L\'evy process with a very low running supremum give substantial contribution to the speed of survival. In this regime, we impose an $x\log x$-moment condition on the L\'evy measure associated to the branching mechanism  and a   lower bound for the branching mechanism which allow us to control the event of survival under favorable environments. In addition, our arguments require  another technical condition which involves the branching mechanism and the L\'evy process that we will specified below.

 	\subsection{Preliminaries}\label{sec_defandprop}
	Let $(\Omega^{(b)}, \mathcal{F}^{(b)}, (\mathcal{F}^{(b)}_t)_{t\geq 0}, \mathbb{P}^{(b)})$ be a filtered probability space satisfying the usual hypothesis on which we may construct the demographic or branching term of the model that we are interested in. We suppose that $(B_t^{(b)}, t\geq 0)$ is a $(\mathcal{F}_t^{(b)})_{t\geq 0}$-adapted standard Brownian motion and  $N^{(b)}(\mathrm{d} s , \mathrm{d} z, \mathrm{d} u)$ is a  $(\mathcal{F}_t^{(b)})_{t\geq 0}$-adapted Poisson random measure on $\mathbb{R}^3_+$  with intensity $\mathrm{d} s \mu(\mathrm{d} z)\mathrm{d} u$ where  $\mu$ satisfies	\begin{equation}\label{eq_finitemean}
 		\int_{(0,\infty)}(z\wedge z^2)\mu(\mathrm{d}z)<\infty.
 	\end{equation} 
	We denote by $\widetilde{N}^{(b)}(\mathrm{d} s , \mathrm{d} z, \mathrm{d} u)$ for the compensated version of $N^{(b)}(\mathrm{d} s , \mathrm{d} z, \mathrm{d} u)$. Further, we also introduce  the  so-called branching mechanism $\psi$, a convex function with the following L\'evy-Khintchine representation
 \begin{equation}
 		\psi(\lambda) =\psi'(0+) \lambda  + \varrho^2 \lambda^2 + \int_{(0,\infty)} \big(e^{-\lambda x} - 1 + \lambda x \big) \mu(\mathrm{d} x), \qquad \lambda \geq 0,
 	\end{equation}
 where $\varrho \geq 0$. Observe that the term $\psi'(0+)$ is well defined (finite) since condition \eqref{eq_finitemean} holds. Moreover, the function $\psi$ describes the stochastic dynamics of the population.
	
	On the other hand, for the environmental term, we consider another filtered probability space $(\Omega^{(e)}, \mathcal{F}^{(e)},(\mathcal{F}^{(e)}_t)_{t\geq 0}, \mathbb{P}^{(e)})$   satisfying the usual hypotheses. Let us consider $\sigma \geq 0$ and $\alpha$  real constants;  and $\pi$  a measure concentrated on $\mathbb{R}\setminus\{0\}$ such that $$\int_{\mathbb{R}} (1\land z^2)\pi(\mathrm{d} z)<\infty.$$ Suppose that \ $(B_t^{(e)}, t\geq 0)$ \ is a $(\mathcal{F}_t^{(e)})_{t\geq0}$ - adapted standard Brownian motion, $N^{(e)}(\mathrm{d} s, \mathrm{d} z)$ is a $(\mathcal{F}_t^{(e)})_{t\geq 0}$ - Poisson random measure on $\mathbb{R}_+ \times \mathbb{R}$ with intensity $\mathrm{d} s \pi(\mathrm{d} z)$, and $\widetilde{N}^{(e)}(\mathrm{d} s, \mathrm{d} z)$ its compensated version. We denote by $S=(S_t, t\geq 0)$ a L\'evy process, that is  a process with  stationary and independent increments and  c\`adl\`ag paths, with  the following L\'evy-It\^o decomposition
 	\begin{equation*}\label{eq_ambLevy}
 		S_t = \alpha t + \sigma B_t^{(e)} + \int_{0}^{t} \int_{(-1,1)} (e^z - 1) \widetilde{N}^{(e)}(\mathrm{d} s, \mathrm{d} z) + \int_{0}^{t} \int_{(-1,1)^c} (e^z - 1) N^{(e)}(\mathrm{d} s, \mathrm{d} z).
 	\end{equation*}
 Note that   $S$ is a L\'evy process with no jumps smaller  than -1.

 	In our setting, the population size has no impact on the evolution of the environment or in other words we are considering independent processes for the demographic and  environmental terms. More precisely, we work now on the space $(\Omega, \mathcal{F}, (\mathcal{F}_t)_{t\geq 0}, \mathbb{P})$ the direct product of the two probability spaces defined above, that is to say, $\Omega := \Omega^{(e)} \times \Omega^{(b)}, \mathcal{F}:= \mathcal{F}^{(e)}\otimes \mathcal{F}^{(b)},  \mathcal{F}_t:=  \mathcal{F}^{(e)}_t \otimes  \mathcal{F}^{(b)}_t$ for $t\geq0$, $ \mathbb{P}:=\mathbb{P}^{(e)} \otimes \mathbb{P}^{(b)} $. Therefore $(Z_t, t\geq 0)$,
   the \textit{continuous-state branching process  in the L\'evy environment $(S_t, t\geq 0)$} is defined on  $(\Omega, \mathcal{F}, (\mathcal{F}_t)_{t\geq 0}, \mathbb{P})$ as the unique non-negative strong solution of the following stochastic differential equation
	\begin{equation}\label{CBILRE}
	\begin{split}
 		Z_t = &Z_0 - \psi'(0+) \int_{0}^{t} Z_s \mathrm{d} s + \int_{0}^{t} \sqrt{2\varrho^2 Z_s} \mathrm{d} B_s^{(b)}    \\   &\hspace{4cm} +  \int_{0}^{t} \int_{(0,\infty)} \int_{0}^{Z_{s-}}z \widetilde{N}^{(b)} (\mathrm{d} s, \mathrm{d} z, \mathrm{d} u)+ \int_{0}^{t} Z_{s-} \mathrm{d} S_s.
		\end{split}
 	\end{equation}
 	According to  Theorem 3.1 in He et al. \cite {he2018continuous} or Theorem 1 in Palau and Pardo \cite{palau2018branching}, the equation has pathwise uniqueness and strong solution when ~$|\psi'(0+)|<\infty$. 
	Furthermore, when conditioned on the environment, the process $Z$ inherits the branching property of the underlying CSBP previously defined. Let us denote by ~$\mathbb{P}_z$, for its law starting from $z\ge 0$.
 	
 	The analysis of the process $Z$ is deeply related to the behaviour and fluctuations of the L\'evy process $\xi=(\xi_t, t\ge 0)$, defined as follows
 	\begin{equation}\label{eq_envir2}
 		\xi_t = \overline{\alpha} t + \sigma B_t^{(e)} + \int_{0}^{t} \int_{(-1,1)} z \widetilde{N}^{(e)}(\mathrm{d} s, \mathrm{d} z) + \int_{0}^{t} \int_{(-1,1)^c}z N^{(e)}(\mathrm{d} s, \mathrm{d} z),
 	\end{equation}
 	where
 	\begin{equation*}
 		\overline{\alpha} := \alpha -\psi'(0+)-\frac{\sigma^2}{2} - \int_{(-1,1)} (e^z -1 -z) \pi(\mathrm{d} z).
 	\end{equation*}
 	Note that, both processes $S$ and $\xi$ generate the same filtration. Actually, the process $\xi$ is obtained from $S$, changing only the drift and jump sizes. 
	We denote by $\mathbb{P}^{(e)}_x$,  for the law of the process $\xi$ starting from $x\in \mathbb{R}$ and when $x=0$, we use the notation $\mathbb{P}^{(e)}$ for $\mathbb{P}^{(e)}_0$.  
 	
 		Further, under  condition \eqref{eq_finitemean}, the process $\left(Z_t e^{-\xi_t},  t\geq 0\right)$ is a quenched martingale implying that  for any $t\geq 0$ and $z\geq 0$,
 	\begin{equation}\label{martingquenched}
 		\mathbb{E}_z[Z_t \ \vert \ S]=ze^{\xi_t}, \ \qquad \mathbb{P}_z \ \textrm{-a.s},
 	\end{equation}
 	see Bansaye et al. \cite{bansaye2021extinction}.   In other words, the process $\xi$
 	plays an analogous role as the random walk associated to the logarithm  of the offspring means in the discrete time framework
 	and  leads to the usual classification for the long-term behaviour of branching processes. More precisely, we say that the 
 	process $Z$ is subcritical,  critical or supercritical accordingly as  $\xi$ drifts to $-\infty$, oscillates or drifts to $+\infty$. In this manuscript, we focus on the subcritical regime. 
 	
 	In addition, under  condition \eqref{eq_finitemean}, there is  another quenched martingale associated to  $(Z_t e^{-\xi_t}, t\geq 0)$ which allow us to compute its Laplace transform, see for instance Proposition 2 in \cite{palau2018branching} or  Theorem 3.4 in \cite{he2018continuous}. In order to compute the Laplace transform of $Z_t e^{-\xi_t}$, we first introduce  the unique  positive solution $(v_t(s,\lambda, \xi), s\in [0,t])$ of the following backward differential equation 
 	\begin{equation}\label{eq_BDE}
 		\frac{\partial}{\partial s} v_t(s,\lambda, \xi) = e^{\xi_s} \psi_0(v_t(s, \lambda, \xi)e^{-\xi_s}), \qquad v_t(t,\lambda, \xi) = \lambda,
 	\end{equation}
 	where 
 	\begin{equation}\label{eq_phi0}
 		\psi_0(\lambda)= \psi(\lambda)- \lambda \psi'(0+)=\varrho^2 \lambda^2 + \int_{(0,\infty)} \big(e^{-\lambda x} - 1 + \lambda x\big) \mu(\mathrm{d} x) .
 	\end{equation}
 	Then the process $\left(\exp\{-v_t(s,\lambda, \xi) Z_s e^{-\xi_s}\},  0\le s\le t\right)$ is a quenched martingale implying that for any $\lambda\geq 0$ and $t\geq s\geq 0$, 
 	\begin{equation}\label{eq_Laplace}
 		\mathbb{E}_z\Big[\exp\{-\lambda Z_t e^{-\xi_t}\}\  \Big|\, S, \mathcal{F}^{(b)}_s\Big] = \exp\{-Z_se^{-\xi_s}v_t(s, \lambda, \xi)\}.
 	\end{equation}
  Moreover, we also consider the random semigroup $h_{s,t}(\lambda)=e^{-\xi_s}v_t(s,\lambda e^{\xi_t},\xi)$ which is well defined for all $\lambda\geq 0$ and $s\in [0,t]$ and satisfies
 	\begin{equation}\label{eq_transLaplace}
 		\mathbb{E}_z\left[ e^{-\lambda Z_t}\Big|\, S,  \mathcal{F}^{(b)}_s \right] = \exp\left\{- Z_sh_{s,t}(\lambda)\right\},
 	\end{equation}
 see Theorem 3.4 in \cite{he2018continuous}. According to Section 2 in \cite{he2018continuous}, the mapping $s\mapsto h_{s,t}(\lambda)$ is the  unique positive pathwise solution to the integral differential equation 
 	\begin{equation}\label{eq_backward}
 		h_{s,t}(\lambda)= e^{\xi_t -\xi_s}\lambda - \int_{s}^{t}e^{\xi_r-\xi_s}\psi_0\big(h_{r,t}(\lambda)\big)\mathrm{d} r, \quad \quad 0\leq s\leq t.
 	\end{equation}

 	We close this subsection by introducing  CBLEs with immigration which will play a fundamental role  in our arguments.  Let $b\geq 0$ be a positive constant and $\nu$ a L\'evy measure concentrated on $(0,\infty)$ such that
 		$$ \int_{(0,\infty)} (1\wedge z) \nu(\mathrm{d} z) < \infty.$$
 		
 		 We say that  $X=(X_t, t\geq 0)$ is a \textit{continuous state branching process  with immigration in the L\'evy environment} $S$  if it is the unique non-negative strong solution of the following stochastic differential equation
		 \begin{equation}\label{CBLREwI}
 		 \begin{split}
 		 	X_t &= X_0 - ( \psi'(0+)-b) \int_{0}^{t} X_s \mathrm{d} s + \int_{0}^{t} \sqrt{2\varrho^2 X_s} \mathrm{d} B_s^{(b)} + \int_{0}^{t} \int_{(0,\infty)} z N^{(i)}(\mathrm{d} s, \mathrm{d} z)   \\ &  \hspace{4cm} +  \int_{0}^{t} \int_{(0,\infty)} \int_{0}^{Z_{s-}}z \widetilde{N}^{(b)} (\mathrm{d} s, \mathrm{d} z, \mathrm{d} u)+ \int_{0}^{t} X_{s-} \mathrm{d} S_s,
 		 \end{split}
		 \end{equation}
 		 where $N^{(i)}(\mathrm{d} s, \mathrm{d} z) $ is a Poisson random measure with intensity $\mathrm{d} s\nu(\mathrm{d}z)$  (see Theorem 1 in \cite{palau2017continuous} or Theorem 5.1 in \cite{he2018continuous}).
		 The process $X$ is characterised by the branching mechanism $\psi$ and the immigration mechanism $\eta$ which is given by the Laplace exponent of a subordinator, i.e.,
 		$$ \eta(\lambda) = b\lambda + \int_{0}^{\infty} (1-e^{-\lambda x})\nu(\mathrm{d} x).$$ Let us denote its law starting from $z\ge 0$, by $\mathbb{Q}_z$.

		According to Theorem 5.3 in \cite{he2018continuous}, the law of $X$  is characterised as follows: for any $\lambda\geq 0$ and $t\geq 0$, 
 		 	\begin{equation}\label{eq:laplaceimigration}
 			\mathbb{Q}_z\Big[e^{-\lambda X_t}\Big] = \mathbb{E}^{(e)}\left[\exp\left\{-zh_{0,t}(\lambda) - \int_{0}^{t}\eta(h_{s,t}(\lambda))\mathrm{d} s\right\}\right].
 		\end{equation}
	For our purposes, we are interested in the limiting distribution of $\mathbb{Q}_z(X_t\in {\rm d} y)$, as $t$ goes to $\infty$. The  limiting distribution was derived by  He et al. \cite{he2018continuous} under general assumptions and can be characterised as we explain below.  First, we require an extension of the functional $(v_t(s,\lambda,\xi), s\in [0,t])$ to negative  times. In order to do so,  let us consider an independent copy $(\xi'_t, t \geq 0)$ of  the L\'evy process $\xi$ and construct the time homogeneous L\'evy process $\Xi= (\Xi_t, -\infty<t<\infty)$,  indexed by $\mathbb{R}$, as follows
 	\begin{equation}\label{eq_levytiemponeg}
 		\Xi_t:=\left\{\begin{array}{ll}
		-\lim_{s\downarrow -t}\xi'_{s}& \textrm{for }\,\, t<0,\\
		0 &\textrm{for }\,\, t=0,\\
		\xi_{t} & \textrm{for }\,\, t>0.
		\end{array}
		\right .
 	\end{equation} 
 	Note that the latter definition ensures that the L\'evy process $\Xi$  has c\`adl\`ag paths on $(-\infty, \infty)$ and, in particular, if $\xi_t$ drifts to $-\infty$, as $t\to \infty$, a.s., then $\Xi_t$ drifts to $\infty$, as $t\to -\infty$, a.s. We denote by $\mathbf{P}^{(e)}_x$ for the law of the process $\Xi$ such that $\Xi_0=x\in \mathbb{R}$ and, when $x=0$, we use the notation $\mathbf{P}^{(e)}$ for $\mathbf{P}^{(e)}_0$.

 	With  the definition of  $\Xi$ in hand, we introduce the  map $s\in (-\infty,0]\mapsto v_0(s,\lambda,\Xi)$ as the unique positive pathwise solution of
 	\begin{equation}\label{eq_v_0}
 		v_0(s,\lambda, \Xi)= \lambda - \int_{s}^{0}e^{\Xi_r}\psi_0\big(e^{-\Xi_r}v_0(r,\lambda,\Xi)\big)\mathrm{d} r, \quad \quad s\leq 0.
 	\end{equation}
 	Implicitly, it also follows that for $s\leq 0$ the map $s\mapsto h_{s,0}^{\Xi}(\lambda)=e^{-\Xi_s}v_0(s,\lambda e^{\Xi_0},\Xi)$ is the  unique positive pathwise solution to the equation
 	\begin{equation}\label{eq_BDEhneg}
 		h_{s,0}^{\Xi}(\lambda)= e^{-\Xi_s}\lambda - \int_{s}^{0}e^{\Xi_r-\Xi_s}\psi_0\big(h_{r,0}^{\Xi}(\lambda)\big)\mathrm{d} r, \quad \quad s\leq 0.
 	\end{equation}
	Hence by time homogeneity of the process $\Xi$, we have, for any $\lambda\geq 0$ and $t\geq 0$, that
 	\begin{equation}\label{eq:laplaceimigrationneg}
 		\mathbb{Q}_z\Big[e^{-\lambda X_t}\Big] = \mathbf{E}^{(e)}\left[\exp\left\{-z h_{-t,0}^{\Xi}(\lambda) - \int_{-t}^{0}\eta(h_{s,0}^{\Xi}(\lambda))\mathrm{d} s\right\}\right],
 \end{equation}
 see Section 5 in \cite{he2018continuous} for further details. Finally,   since $\xi$ drifts to $-\infty$ and  under the following $\log x$-moment condition for the L\'evy measure $\nu$, 
\begin{equation}\label{eq_logx}
	\int_{1}^{\infty}  \log u \,\nu(\mathrm{d} u) <\infty,
\end{equation} 
according to Theorem 5.6 in \cite{he2018continuous}, we have that there exists a probability measure $\Pi$ on $[0,\infty)$ such that $\mathbb{Q}_z(X_t\in \cdot)$ converges weakly towards $\Pi$, as $t$ goes to $\infty$, for every $z\ge 0$. Moreover, 
\begin{equation}\label{ergodic}
\int_{[0,\infty)} e^{-\lambda y}\Pi(\mathrm{d} y) = \mathbf{E}^{(e)}\left[\exp\left\{- \int_{-\infty}^{0}\eta(h_{s,0}^{\Xi}(\lambda))\mathrm{d} s\right\}\right].
\end{equation}
It is important to note that  $\psi^\prime $ is the Laplace exponent of a subordinator. Indeed
\[
\psi_0^\prime(\lambda)=2\varrho^2\lambda+\int_{(0,\infty)} (1-e^{-\lambda x})x\mu({\rm d} x),
\]
 so when we take $\eta\equiv \psi^\prime_0$ in the previous discussion, condition \eqref{eq_logx} is reduced to the following assumption that will be used below.
 \begin{assumption}\label{eq_xlogx} The L\'evy measure $\mu $ satisfies
 \begin{equation*}
	\int_{1}^{\infty}  u\log u \,\mu(\mathrm{d} u) <\infty.
\end{equation*}
 \end{assumption}

 For our purposes, we require the following exponential moment condition on the L\'evy environment $\xi$, 
 	 \begin{assumption}\label{eq_moments}
 	 	there exists $\vartheta>0$ such that 
 	 		\begin{equation*}
 	 	  \int_{\{|x|>1\}}e^{\theta x}\pi(\mathrm{d} x)<\infty, \qquad  \textrm{for all} \qquad \theta \in [0, \vartheta].
 	 	\end{equation*}
 	 \end{assumption}
 This condition is equivalent to the existence of  the Laplace transform of $\xi$, that is  $\mathbb{E}^{(e)}[e^{\theta \xi_1}]$ is well defined for $\theta\in [0, \vartheta]$ (see for instance Lemma 26.4 in Sato \cite{ken1999levy}). The latter implies that we can  introduce  the Laplace exponent of $\xi$  as follows $\Phi_\xi(\theta):=\log \mathbb{E}^{(e)}[e^{\theta \xi_1}]$, for $\theta\in [0, \vartheta]$. Again from Lemma 26.4  in \cite{ken1999levy}, we also have $\Phi_\xi(\theta)\in C^\infty$ and $\Phi_\xi^{\prime\prime}(\theta)>0$, for $\theta\in (0, \vartheta)$.

Another object which will be relevant for our analysis  is the so-called exponential martingale associated to  $\xi$, i.e.
 	\[
 	M^{(\theta)}_t=\exp\Big\{\xi_{t}-t\Phi_\xi(\theta)\Big\}, \qquad t\ge 0,
 	\]
 	which is well-defined for $\theta\in [0,\vartheta]$ under assumption \eqref{eq_moments}. It is well-known that $(M^{(\theta)}_t, t\ge 0)$ is  a $(\mathcal{F}^{(e)}_t)_{t\ge 0}$-martingale and that it induces a change of measure which is known as the Esscher transform, that is to say
 	\begin{equation}\label{eq_medida_Essher}
 		\mathbb{P}^{(e,\theta)}(\Lambda):= \mathbb{E}^{(e)}\Big[M_t^{(\theta)} \mathbf{1}_{\Lambda}\Big], \qquad \textrm{for}\quad \Lambda\in \mathcal{F}^{(e)}_t.
 	\end{equation}
 	Similarly to the critical case, which was studied by  Bansaye et al. \cite{bansaye2021extinction}, the asymptotic analysis  of the intermediate subcritical regime requires the notion of the renewal functions $U^{(\theta)}$ and $\widehat{U}^{(\theta)}$ under $\mathbb{P}^{(e,\theta)}$, which are associated to the supremum and infimum of $\xi$, respectively. We refer to Section \ref{prelimLP} for a proper definition or Section VI.4 in Bertoin \cite{bertoin1996levy}. The renewal functions $U^{(\theta)}$ and $\widehat{U}^{(\theta)}$ are  finite, subadditive, continuous and increasing. Moreover, they are identically 0 on $(-\infty, 0]$ and strictly positive on $(0,\infty)$   (see for instance  Lemma 6.4 and Section 8.2 in the monograph of Doney
 	\cite{doney2007fluctuation}). The renewal functions are also related to the so-called ladder height processes which have the same range as the supremum and infimum of $\xi$. In order to state our main results, we require the notation of the  increasing ladder height process $H=(H_t, t\geq 0)$ which is releted to the supremum of $\xi$, see Section \ref{prelimLP} for a proper definition.
 	
\subsection{Main results}\label{sec:mainresults}

We  are now ready to state our main results. 
In order to introduce our main results we require some technical assumptions  on the branching mechanism and on the environment which will control the event of survival. Let us start with  the strongly subcritical regime where some  assumptions on the branching mechanism and  on the environment are required. For the environment,  we assume Assumption \ref{eq_moments} up to $\vartheta =1$, which guarantee the existence of exponential positive moments on $[0,1]$, together with $\Phi_\xi'(0)<0$ and $\Phi_\xi'(1)<0$. The latter guarantees the use of the Esscher transform at  $\theta=1$ in   \eqref{eq_medida_Essher}, and that the L\'evy environment is strongly subcritical.

For  the branching mechanism, we require two conditions: Assumption \ref{eq_xlogx} for the  L\'evy measure $\mu$ and the so-called Grey's condition, i.e.
\begin{assumption}\label{eq:grey} the function $\psi_0$ satisfies
	\begin{equation*}
	\int_{1}^{\infty}\frac{1}{\psi_0(\lambda)} \mathrm{d} \lambda < \infty.
\end{equation*} 
\end{assumption}
The latter assumption guarantees that the process $Z$ is  absorbed at $0$, $\mathbb{P}_z$-a.s., for $z>0$,  see Corollary 4.4 in \cite{he2018continuous}.   It is important to note that  Grey's condition  is a necessary and sufficient condition for absorption, with positive probability,  both for  CSBPs (see Grey \cite{grey1974asymptotic}) and for CBLEs (see Theorem 4.1 in \cite{he2018continuous}).  
On the other hand, the $x\log x$-moment condition for the L\'evy measure $\mu$ is a necessary and sufficient condition for the ergodicity of the CBLE with immigration  which is implicit under the Esscher transform of  $Z$. The $x\log x$-moment condition also appears in the discrete setting  to study the long-term behaviour of branching processes in a strongly subcritical  random environment (see e.g. Theorem 1.1 in \cite{afanasyevstrongly}).

It turns out that, in this regime,   the survival probability  decays with the same rate as the expected generation size, i.e.  as \ $\mathbb{E}_z[Z_t]$ for $t$ large enough,  up to a multiplicative constant. A similar behaviour   appears for subcritical Galton-Watson processes  as well as for discrete branching processes in  random environments in the strongly subcritical regime (see for instance Theorem 1.1 in \cite{afanasyevstrongly}). Moreover from \eqref{martingquenched}, we have 
\begin{equation*}
	\mathbb{E}_z[Z_t]=z\mathbb{E}[e^{\xi_t}] = ze^{\Phi_\xi(1)t}.
\end{equation*}
In other words,  the survival probability decays exponentially up to a multiplicative constant which is proportional to the initial state of the population as is stated below.  

\begin{theorem}\label{theo_stronglycase}
	Suppose that we are in the strongly subcritical regime and that Assumptions \ref{eq_xlogx}  and \ref{eq:grey} are fulfilled. We also assume that  
	\begin{equation}\label{eq:hipstrongcase}
		\int_{0}^{\infty}\mathbf{E}^{(e,1)}\left[\exp\left\{-\int_{-1}^{0} \psi'_0\big(h_{s,0}^{\Xi}(\lambda)\big)\mathrm{d} s\right\}\right] \mathrm{d}\lambda<\infty.
	\end{equation}
	Then for every $z>0$, we have 
	\[\begin{split}
		\lim\limits_{t\to\infty}e^{-\Phi_{\xi}(1)t}\mathbb{P}_{z}(Z_t>0)   = z\mathfrak{B}_1,
	\end{split}\]
	where 	\[\mathfrak{B}_1=  \int_{0}^{\infty}\mathbf{E}^{(e,1)}\left[\exp\left\{-\int_{-\infty}^{0}\psi_0'(h_{s,0}^{\Xi}(\lambda))\mathrm{d} s\right\}\right] \mathrm{d} \lambda\in (0,\infty).\]
\end{theorem}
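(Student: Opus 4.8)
The plan is to express the survival probability $\mathbb{P}_z(Z_t>0)$ through the Laplace transform identity \eqref{eq_transLaplace} by noting that $\mathbb{P}_z(Z_t>0) = 1 - \lim_{\lambda\to\infty}\mathbb{E}_z[e^{-\lambda Z_t}] = \mathbb{E}^{(e)}\big[1-e^{-z h_{0,t}(\infty)}\big]$, where $h_{0,t}(\infty)=\lim_{\lambda\to\infty}h_{0,t}(\lambda)$ is finite $\mathbb{P}^{(e)}$-a.s.\ precisely by Grey's condition \eqref{eq:grey}. Next I would apply the Esscher change of measure at $\theta=1$: writing $\mathbb{P}_z(Z_t>0)=\mathbb{E}^{(e)}[1-e^{-zh_{0,t}(\infty)}]$ and using that $M^{(1)}_t=e^{\xi_t-\Phi_\xi(1)t}$ is a martingale, one gets
\[
e^{-\Phi_\xi(1)t}\,\mathbb{P}_z(Z_t>0) = \mathbb{E}^{(e,1)}\!\left[e^{-\xi_t}\big(1-e^{-zh_{0,t}(\infty)}\big)\right].
\]
The key structural observation, as in the strongly subcritical discrete setting, is that $e^{-\xi_t}(1-e^{-zh_{0,t}(\infty)})$ should behave, for large $t$, like $z$ times something that converges; indeed since $h_{0,t}(\infty)\to 0$ roughly at rate $e^{-\xi_t}$-ish one expects $e^{-\xi_t}(1-e^{-zh_{0,t}(\infty)})\approx z e^{-\xi_t}h_{0,t}(\infty)$ to have a nontrivial a.s.\ limit under $\mathbb{P}^{(e,1)}$, and then I would justify passing the limit inside the expectation by dominated convergence.

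To make this precise I would use the representation of $h_{0,t}$ through the backward equation \eqref{eq_backward} and the extension to negative times via the two-sided L\'evy process $\Xi$. Concretely, by time-reversal (Lemma/identity \eqref{eq_lemmaduality}) the law of $(\xi_{(t-s)^-}-\xi_t, 0\le s\le t)$ under $\mathbb{P}^{(e,1)}$ matches that of the dual process, which after the Esscher tilt is again a L\'evy process; combining this with the construction \eqref{eq_levytiemponeg} of $\Xi$ one rewrites $e^{-\xi_t}h_{0,t}(\infty)$ in terms of $h^{\Xi}_{-t,0}(\infty)$ evaluated along $\Xi$. The point of the $x\log x$ condition \eqref{eq_xlogx} is that, with $\eta\equiv\psi'_0$, it is exactly the $\log x$ condition \eqref{eq_logx} needed for Theorem 5.6 of \cite{he2018continuous}, so that $h^{\Xi}_{-t,0}(\infty)$-type quantities stabilise and the relevant integrals $\int_{-t}^{0}\psi'_0(h^{\Xi}_{s,0}(\lambda))\mathrm{d}s$ converge a.s.\ as $t\to\infty$ to $\int_{-\infty}^{0}\psi'_0(h^{\Xi}_{s,0}(\lambda))\mathrm{d}s$. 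Integrating the resulting pointwise limit against $\mathrm{d}\lambda$ (using $1-e^{-zh_{0,t}(\infty)}=\int_0^\infty z\,\frac{\partial}{\partial\lambda}(\text{something})\,\mathrm{d}\lambda$, i.e.\ the Laplace-transform/Fubini trick relating the survival probability to $\int_0^\infty \mathbb{E}_z[e^{-\lambda Z_t}]$-derivatives) produces the constant $\mathfrak{B}_1$ in the stated integral form, and the positivity $\mathfrak{B}_1>0$ follows since the integrand is strictly positive, while finiteness $\mathfrak{B}_1<\infty$ is exactly the imposed hypothesis \eqref{eq:hipstrongcase} together with monotone convergence from the interval $[-1,0]$ to $(-\infty,0]$.

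I expect the main obstacle to be the uniform integrability / domination step needed to interchange $\lim_{t\to\infty}$ with $\mathbb{E}^{(e,1)}[\cdot]$ and with $\int_0^\infty\mathrm{d}\lambda$. One must control $e^{-\xi_t}(1-e^{-zh_{0,t}(\infty)})$ from above uniformly in $t$ by an integrable (under $\mathbb{P}^{(e,1)}$) random variable; the natural bound $1-e^{-zh_{0,t}(\infty)}\le z h_{0,t}(\infty)$ together with the inequality $h_{0,t}(\infty)\le e^{\xi_t}/\big(\text{(time spent near the infimum of }\xi)\big)$ coming from \eqref{eq_backward} and Grey's condition reduces matters to a fluctuation-theoretic estimate: one needs that under the Esscher-tilted (still subcritical) measure, the relevant functional of the path of $\xi$ — essentially an exponential functional $\int_0^t e^{\xi_s-\xi_t}\mathrm{d}s$ or its reciprocal — has moments that are bounded in $t$, which is where the asymptotic theory of exponential functionals of L\'evy processes and the renewal-function bounds \eqref{grandO} enter. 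The $\lambda$-integrability is then handled by the same bound combined precisely with assumption \eqref{eq:hipstrongcase}, which is tailored so that the dominating function in $\lambda$ is integrable. Once domination is in place, the a.s.\ convergence of the integrand (via the a.s.\ convergence of the $\Xi$-functionals guaranteed by \eqref{eq_xlogx} and Theorem 5.6 of \cite{he2018continuous}) closes the argument.
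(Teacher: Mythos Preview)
Your overall architecture matches the paper's: Esscher-tilt at $\theta=1$, write $e^{-\Phi_\xi(1)t}\mathbb{P}_z(Z_t>0)=\mathbb{E}^{(e,1)}[G_t(\infty)]$ with $G_t(\lambda)=e^{-\xi_t}(1-e^{-zh_{0,t}(\lambda)})$, expand $G_t(\infty)=\int_0^\infty G'_t(\lambda)\,\mathrm{d}\lambda$, and pass to the $\Xi$-representation on negative times to identify the limit as $z\mathfrak{B}_1$. However, you overshoot on the domination step, which is where your proposal becomes vague and invokes tools that are not needed.

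Once you compute $G'_t(\lambda)$ explicitly --- differentiating \eqref{eq_BDE} in $\lambda$ gives $\frac{\mathrm{d}}{\mathrm{d}\lambda}v_t(0,\lambda,\xi)=\exp\{-\int_0^t\psi'_0(e^{-\xi_s}v_t(s,\lambda,\xi))\,\mathrm{d}s\}$, hence
\[
G'_t(\lambda)=z\exp\Big\{-zh_{0,t}(\lambda)-\int_0^t\psi'_0(h_{s,t}(\lambda))\,\mathrm{d}s\Big\},
\]
and after the $\Xi$-rewriting,
\[
\mathbb{E}^{(e,1)}[G'_t(\lambda)]=z\,\mathbf{E}^{(e,1)}\!\Big[\exp\Big\{-zh^{\Xi}_{-t,0}(\lambda)-\int_{-t}^{0}\psi'_0\big(h^{\Xi}_{s,0}(\lambda)\big)\,\mathrm{d}s\Big\}\Big]
\]
--- the domination is immediate. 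Since $\psi'_0\ge 0$ and $h^{\Xi}_{-t,0}(\lambda)\ge 0$, for all $t\ge 1$ one has $\mathbb{E}^{(e,1)}[G'_t(\lambda)]\le z\,g(\lambda)$ with $g(\lambda)$ exactly the integrand in \eqref{eq:hipstrongcase}, and also $0\le G'_t(\lambda)\le z$ pointwise for the inner expectation. That is all. There is no need for bounds on $h_{0,t}(\infty)$ via exponential functionals, no renewal-function estimates \eqref{grandO}, and no moment control of $\int_0^t e^{\xi_s-\xi_t}\mathrm{d}s$; those ideas belong to the intermediate regime (Lemma~\ref{lem_inter_cota0}), not here. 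Your attempt to dominate $G_t(\infty)$ directly by $ze^{-\xi_t}h_{0,t}(\infty)$ is a detour that bypasses the whole point of the $\lambda$-integral: the derivative $G'_t(\lambda)$ is already manifestly dominated, uniformly in $t\ge 1$, by a function integrable under hypothesis \eqref{eq:hipstrongcase}.

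Two small corrections: (i) finiteness $\mathfrak{B}_1<\infty$ is not ``monotone convergence from $[-1,0]$ to $(-\infty,0]$'' but the trivial inequality $\int_{-\infty}^0\ge\int_{-1}^0$ inside a negative exponent; (ii) strict positivity of the limiting integrand (hence $\mathfrak{B}_1>0$) is not automatic from ``the integrand is strictly positive'' --- one needs $\int_{-\infty}^0\psi'_0(h^{\Xi}_{s,0}(\lambda))\,\mathrm{d}s<\infty$ a.s., which is precisely where \eqref{eq_xlogx} enters via Theorem~5.6/Corollary~5.7 of \cite{he2018continuous}, as you noted elsewhere.
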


It is important to note that condition \eqref{eq:hipstrongcase} can be interpreted in terms of a CBLE with immigration. In other words, let $X=(X_t, t\ge 0)$ be a CBLE with immigration starting from 0 with branching and immigration mechanisms $(\psi_0, \psi_0^\prime)$; and with auxiliary L\'evy process (to the environment) given by $(\xi, \mathbb{P}^{(e,1)})$. Let us denote by $\mathbb{Q}_0$ for its law. Hence, condition \eqref{eq:hipstrongcase} is equivalent to 
\[
\mathbb{Q}_0\left[\frac{1}{X_1}\right]<\infty
\]
and 
\[
\mathfrak{B}_1=\int_{[0,\infty)} \frac{1}{y}\,\Pi_{\psi_0}(\rm {d} y),
\]
where $\Pi_{\psi_0}(\rm{d}y)$ denotes the limiting probability distribution of $\mathbb{Q}_0(X_t\in \rm{d}y)$. 

In  general, it seems difficult to compute explicitly the constant $\mathfrak{B}_1$ except for the stable case, that is when    $\psi_0(\lambda)= C\lambda^{1+\beta}$  with $\beta\in (0,1]$ and $C>0$ (observe that $C=2\varrho^2$ when $\beta=1$), where  $\mathfrak{B}_1$ can be computed explicitly and coincides with the constant that appears  in Theorem 5.1 in Li and Xu \cite{li2018asymptotic}.  Note that in this  case $\psi_0$ satisfies Assumptions \ref{eq_xlogx} and \ref{eq:grey} and also we observe that the integral in condition \eqref{eq:hipstrongcase} can be rewritten as follows
			\begin{equation*}
				\begin{split}
					\int_{0}^{\infty}\mathbf{E}^{(e,1)}\left[\exp\left\{-\int_{-1}^{0} \psi'_0\big(h_{s,0}^{\Xi}(\lambda)\big)\mathrm{d} s\right\}\right] \mathrm{d}\lambda&  = \mathbb{E}^{(e,1)}\left[\left(\int_{0}^{1}e^{\beta \xi_u}\mathrm{d} u\right)^{-1/\beta}\right].
				\end{split}
			\end{equation*}
		Now, using the exponential moment Assumption \ref{eq_moments} with $\vartheta=1$ and Lemma 2.2 in \cite{he2018continuous}, we deduce that the expectation in the  right-hand side is finite. Therefore, for any CBLE with stable branching mechanism and associated L\'evy environment satisfying Assumption \ref{eq_moments} with $\vartheta =1$, $\Phi_\xi'(0)<0$ and $\Phi_{\xi}'(1)<0$, we get from Theorem \ref{theo_stronglycase} that
			$$	\lim\limits_{t\to\infty}e^{-\Phi_{\xi}(1)t}\mathbb{P}_{z}(Z_t>0)   = z(\beta C)^{-1/\beta} \mathbb{E}^{(e,1)}\left[\left(\int_{0}^{\infty}e^{\beta \xi_u}\mathrm{d} u\right)^{-1/\beta}\right].   $$
			We refer to subsection \ref{sec:stablecase} for further details  about the computation of  this constant.


Our next main result deals with the intermediate subcritical regime.  This regime is governed by the exponential moment condition on the environment \eqref{eq_moments} with $\vartheta>1$,  together with $\Phi_\xi'(0)<0$ and $\Phi_\xi^\prime(1)=0$.  In other words, the L\'evy process $\xi$ drifts to $-\infty$ under $\mathbb{P}^{(e)}$ and oscillates  under the probability measure $\mathbb{P}^{(e,1)}$, induced by the Esscher transform \eqref{eq_medida_Essher}. In order to state our result, we require to introduce the notion of  L\'evy processes conditioned to stay positive.   According to Lemma 1 in Chaumont and Doney \cite{chaumont2005levy},  the process $(\widehat{U}(\xi_t)\mathbf{1}_{\{\underline{\xi}_t>0\}}, t\geq 0)$ is a martingale with respect to $(\mathcal{F}_t^{(e)})_{t\geq 0}$. With the help of  this martingale,  we define  a new measure which corresponds to the law of $\xi$ \textit{conditioned to stay positive}, as follows: for $\Lambda \in \mathcal{F}^{(e)}_t$ and $x>0$, 
\begin{equation} \label{defPuparrowx}
	\mathbb{P}^{\uparrow}_{x} (\Lambda)
	:=\frac{1}{\widehat{U}(x)}\mathbb{E}^{(e)}_{x}\left[\widehat{U}(\xi_t) \mathbf{1}_{\left\{\underline{\xi}_t> 0\right\}} \mathbf{1}_{\Lambda}\right].
\end{equation}
Similarly, we can define the process conditioned to stay positive under $\mathbb{P}^{(e,1)}$ but  with the martingale associated with the renewal measure $\widehat{U}^{(1)}$ instead of $\widehat{U}$. For a complete overview on this theory, the reader is referred to the monographs of Bertoin \cite{bertoin1996levy} and Doney \cite{doney2007fluctuation}, see also  Chaumont \cite{chaumont1996conditionings} and Chaumont and Doney \cite{chaumont2005levy} and  references therein.

Similarly as in the critical regime studied by Bansaye et al. \cite{bansaye2021extinction}, we require the following assumption on the branching mechanism
\begin{assumption}\label{eq_psiupperbound}
	there exists $\beta \in (0,1]$ and $C>0$ such that 
\begin{equation*}
  \psi_0(\lambda)\geq C \lambda^{1+\beta} \qquad \text{for } \quad \lambda \geq 0.
\end{equation*}
\end{assumption}
This condition will help us to control the probability of survival under  environments with large extrema, that is when  the  supremum of the environment is very large. We also note   that it  implies Grey's condition \eqref{eq:grey} and thus  we ensure that $Z$ gets extinct in finite time with positive probability. 

Furthermore, we also assume that the L\'evy measure $\mu$, which is associated to the branching mechanism, satisfies the $x\log x$ moment condition \eqref{eq_xlogx}.

Similarly as in the strongly case, the survival probability  decays exponentially but now with a  polynomial factor of order $1/2$, up to a multiplicative constant which is proportional to the initial state of the population. In other words, we have the following result.

\begin{theorem}[Intermediate subcritical regime]\label{theo_interm}Suppose that Assumptions \ref{eq_xlogx}, \ref{eq_moments}, with $\vartheta >1$, $\Phi_\xi'(0)<0$, $\Phi_{\xi}'(1)=0$,  and \ref{eq_psiupperbound} hold. 
We also suppose that, for  $x<0$ ,
	\begin{equation}\label{eq:assumpinter}
		\int_{0}^{\infty}\mathbf{E}^{(e,1), \uparrow}_{-x}\left[\exp\left\{-\int_{-1}^{0}\psi_0'(h_{s,0}^{\Xi}(\lambda)) \mathrm{d} s \right\} \right]\mathrm{d} \lambda <\infty.
	\end{equation} 
	Then, for every $z>0$, we have 
	\begin{equation*}
		\lim\limits_{t \to \infty} t^{1/2}e^{-\Phi_{\xi}(1)t} \mathbb{P}_{z}(Z_t>0)=z\mathbb{E}^{(e,1)}\big[H_1\big]\sqrt{\frac{2}{\pi \Phi_\xi''(1)}}\mathfrak{B}_2 ,
	\end{equation*}
	where
	\[\begin{split}
		\mathfrak{B}_2 &= \lim_{x \to -\infty} U^{(1)}(-x) \mathbf{E}^{(e,1), \uparrow}_{-x}\left[\int_{0}^{\infty}\exp\left\{-\int_{-\infty}^{0}\psi_0'\big(h_{s,0}^{\Xi}(\lambda)\big)\mathrm{d} s\right\}\mathrm{d} \lambda\right].
	\end{split}\]
	
\end{theorem}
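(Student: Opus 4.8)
\textbf{Proof proposal for Theorem \ref{theo_interm} (Intermediate subcritical regime).}

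The plan is to mirror the strategy used for the strongly subcritical regime (Theorem \ref{theo_stronglycase}), but now accounting for the fact that under the Esscher transform $\mathbb{P}^{(e,1)}$ the environment $\xi$ oscillates rather than drifts to $-\infty$; this forces the extra polynomial factor $t^{1/2}$ to appear via fluctuation theory, exactly as in the critical analysis of \cite{bansaye2021extinction}. First I would start from the quenched Laplace functional \eqref{eq_transLaplace}, writing
\[
\mathbb{P}_z(Z_t>0)=\mathbb{E}\Big[1-e^{-Z_t\theta}\Big]\Big|_{\theta\to\infty}= \mathbb{E}\Big[1-\exp\{-Z_0 h_{0,t}(\infty)\}\Big],
\]
so that, using $Z_0=z$ and the branching property, $\mathbb{P}_z(Z_t>0)=1-\mathbb{E}^{(e)}[\exp\{-z h_{0,t}(\infty)\}]$ where $h_{0,t}(\infty)=\lim_{\lambda\to\infty}h_{0,t}(\lambda)$ is finite by Grey's condition \eqref{eq:grey} (a consequence of \eqref{eq_psiupperbound}). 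For $t$ large this is $\approx z\,\mathbb{E}^{(e)}[h_{0,t}(\infty)]$, so the task reduces to the sharp asymptotics of $\mathbb{E}^{(e)}[h_{0,t}(\infty)]$. Next I would perform the Esscher change of measure at $\theta=1$: since $h_{0,t}(\lambda)=e^{\xi_t-\xi_0}\lambda-\int_0^t e^{\xi_r-\xi_0}\psi_0(h_{r,t}(\lambda))\,\mathrm{d} r$ one shows $h_{0,t}(\infty)=e^{\xi_t}\widetilde h_t$ for an appropriate functional, and the factor $e^{\xi_t}$ combines with $e^{-t\Phi_\xi(1)}$ to produce precisely the martingale $M_t^{(1)}$ driving $\mathbb{P}^{(e,1)}$. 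Thus
\[
e^{-\Phi_\xi(1)t}\mathbb{P}_z(Z_t>0)\sim z\,\mathbb{E}^{(e,1)}\big[G_t\big],
\]
where $G_t$ is a bounded functional of the time-reversed environment on $[0,t]$; by the duality \eqref{eq_lemmaduality} this rewrites as an expectation involving the process run from $0$ with a running supremum constraint, bringing in $h^{\Xi}_{s,0}$ and the homogeneous extension $\Xi$ from \eqref{eq_levytiemponeg}.

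The heart of the argument is then the fluctuation-theoretic estimate. Following \cite{bansaye2021extinction}, I would split the event according to whether the running supremum $\overline{\xi}_t$ of the (reversed) environment is small or large. On $\{\overline{\xi}_t \le a\}$ one uses \eqref{eq_psiupperbound} — which gives the pointwise bound $h_{s,t}(\infty)\le \big(\beta C (t-s)\big)^{-1/\beta}e^{\text{(env.\ terms)}}$, controlling the contribution of favourable environments — to show this part is negligible, while on the complementary event the dominant paths are those whose supremum grows slowly. Combining the Esscher transform with the oscillation $\Phi_\xi'(1)=0$, one is led to a classical renewal/ladder-height decomposition: the relevant probability behaves like $\mathbb{P}^{(e,1)}(\overline{\xi}_t\le x)$-type quantities, and the local central limit theorem for the ladder process together with $\mathbb{E}^{(e,1)}[H_1]<\infty$ (finite because $\Phi_\xi''(1)<\infty$ under \eqref{eq_moments} with $\vartheta^+>1$) yields the asymptotic $\mathbb{P}^{(e,1)}(\overline\xi_t\le x)\sim \mathbb{E}^{(e,1)}[H_1]\,U^{(1)}(x)\,\sqrt{2/(\pi\Phi_\xi''(1)t)}$. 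This is exactly where the $t^{-1/2}$ and the prefactor $\mathbb{E}^{(e,1)}[H_1]\sqrt{2/(\pi\Phi_\xi''(1))}$ originate. Conditioning on $\{\underline{\Xi}\ge -x\}$ turns the reversed environment into the process conditioned to stay positive, $\mathbf{P}^{(e,1),\uparrow}_{-x}$, via the $\widehat U^{(1)}$-transform \eqref{defPuparrowx}; passing to the limit $x\to\infty$ and using assumption \eqref{eq:assumpinter} to dominate the $\mathrm{d}\lambda$-integral (the $x<0$, $\lambda$-integrability hypothesis is precisely a $1/X_1$-moment condition for the associated CBLE with immigration, as in the strongly subcritical discussion), the surviving term is the constant $\mathfrak{B}_2$.

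I expect the main obstacle to be the uniform integrability / domination needed to justify two interchanges of limits simultaneously: (i) replacing $1-e^{-z h_{0,t}(\infty)}$ by $z\,h_{0,t}(\infty)$ and $h_{0,t}(\infty)$ by $h_{0,t}(\lambda)$ integrated against $\mathrm{d}\lambda$ (this is where the $x\log x$ condition \eqref{eq_xlogx} enters, guaranteeing the limiting CBLE-with-immigration distribution $\Pi_{\psi'}$ exists and has the required $1/y$-moment), and (ii) exchanging the limit $x\to\infty$ with the $\mathrm{d}\lambda$-integral and with the $t\to\infty$ renewal asymptotics. Controlling the large-supremum paths requires a careful Markov-property decomposition at the first passage time of $\xi$ above level $x$, together with the bound \eqref{grandO}, $U^{(1)}(x)\le C_1 x$, to keep the error terms $o(t^{-1/2})$; the subtle point is that one cannot simply bound $h_{s,t}(\infty)$ by its value on the whole line, so one must track the dependence on the excursion of the environment above its past supremum. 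The rest — monotone/dominated convergence, continuity of $U^{(1)}$ and of $\lambda\mapsto h^{\Xi}_{s,0}(\lambda)$, and positivity/finiteness of $\mathfrak{B}_2$ — should follow routinely from the stated hypotheses and the properties of renewal functions recalled in Section \ref{sec_defandprop}.
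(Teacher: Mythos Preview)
Your proposal has the right architecture --- Esscher transform at $\theta=1$, split according to the running supremum, pass to the environment conditioned to stay positive, extract the $t^{-1/2}$ from a ladder-height asymptotic --- but there are two concrete errors that would derail the argument.

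\medskip
\textbf{(1) The two halves of the supremum split are inverted.} You write that on $\{\overline{\xi}_t\le a\}$ one uses \eqref{eq_psiupperbound} to show this part is negligible, with the main contribution coming from the complementary event. It is the other way around. In the paper, Lemma~\ref{lem_inter_cota0} shows that the \emph{large}-supremum part $\{\overline{\xi}_{t-\delta}\ge y\}$ is negligible: \eqref{eq_psiupperbound} yields the bound $v_t(0,\infty,\xi-\xi_0)\le (C\beta\,\mathtt{I}_{0,t}(\beta(\xi-\xi_0)))^{-1/\beta}$, and after duality this becomes an exponential-functional moment on $\{\underline{\xi}_{t-\delta}\le -y-x\}$, which is killed by \cite[Lemma~3.5]{li2018asymptotic}. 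The main contribution is Lemma~\ref{lem_inter_cota1}, computed on $\{\overline{\xi}_t<0\}$, where one passes to $\mathbf{P}^{(e,1),\uparrow}_{-x}$ via the $U^{(1)}$-harmonic transform. If you try to make the \emph{low}-supremum part negligible you will find no mechanism: that is precisely where the surviving constant $\mathfrak{b}_2(x)$ lives.

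\medskip
\textbf{(2) The linearisation $1-e^{-zh_{0,t}(\infty)}\approx z\,h_{0,t}(\infty)$ is neither used nor justifiable here.} The paper never makes this replacement. Instead it writes, exactly as in the strongly subcritical proof, $G_t(\infty)=\int_0^\infty G_t'(\lambda)\,\mathrm{d}\lambda$ with $G_t(\lambda)=e^{-\xi_t}(1-\exp\{-zh_{0,t}(\lambda)\})$, and computes $G_t'(\lambda)$ as the Laplace transform of a CBLE with immigration $\psi_0'$. The $\lambda$-integral then carries the whole argument; assumption~\eqref{eq:assumpinter} is exactly the domination needed to exchange the $\lambda$-integral with the $t\to\infty$ limit under $\mathbf{P}^{(e,1),\uparrow}_{-x}$, and \eqref{eq_xlogx} guarantees positivity of the resulting constant. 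What you flag as an ``obstacle'' (replacing $h_{0,t}(\infty)$ by an integral over $\lambda$) is not an obstacle but the actual technique, and no first-passage-time decomposition of $\xi$ above a level is used anywhere. The final step sandwiches $\mathbb{P}_z(Z_t>0)$ between $(1\pm\epsilon)\,\mathbb{P}_{(z,y')}(Z_{t-\delta}>0,\overline{\xi}_{t-\delta}<0)$ for suitable $y'<0$, and lets $y'\to-\infty$ to produce $\mathfrak{B}_2$.
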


We conclude this section with our  last main result which is  devoted  to  the study of the CBLE process conditioned to survival or  $Q$-process.  As we will see below and in an analogous sense to what has been proved for the classical Galton-Watson process (see for instance Section 12.3 in \cite{kyprianou2014fluctuations}) and CSBP (see Theorem 4.1 in \cite{lambert2007}), the conditioned process has the same law as a CBLE with immigration but with a different random environment  to the original. In  particular, we extend recent results in Lambert \cite{lambert2007}, for the  case of CSBPs;  and in Palau and Pardo \cite{palau2018branching}, for the continuous state branching  processes in a Brownian environment with a  stable branching mechanism. Let 
	$$ T_0= \{t\geq 0: Z_t =0\},$$
be the extinction time of the process $Z$. 

\begin{theorem}[$Q$-process]\label{teo:Qprocess}
		Let $(Z_t, t\geq 0)$ be a CBLE in a strongly subcritical regime, i.e. satisfying the conditions in Theorem \ref{theo_stronglycase}, or in an intermediate subcritical regime, i.e. satisfying the conditions in Theorem \ref{theo_interm}. Then for all $z, t>0$, 
		\begin{itemize}
		\item[(i)] the conditional laws $\mathbb{P}_z(\ \cdot \ | \ T_0>t)$ converges, as $t\to\infty$, towards a limit law $\mathbb{P}^\natural$, in the sense that for any $t\ge 0$ and $\Lambda\in \mathcal{F}_t$, 
		\[
		\lim_{s\to \infty}\mathbb{P}_z(\Lambda\ |\  T_0>s)=\mathbb{P}^\natural_z(\Lambda).
		\]
		\item[(ii)] The probability measure  $\mathbb{P}^\natural_z$ can be expressed as Doob $h$-transform of  $\mathbb{P}_z$ based on the martingale 
		\[
		D_t=Z_t e^{-\Phi_\xi(1)t}, 
		\]
		that is, for $\Lambda\in \mathcal{F}_t$,
		\[
		 \mathbb{P}_z^{\natural}(\Lambda): = \mathbb{E}_z\left[\frac{Z_t}{z} e^{-\Phi_\xi(1)t}\mathbf{1}_{\Lambda}\right].
		 \]
		 \item[(iii)] The process $Z$, under $\mathbb{P}^\natural_z$, is a CBLE with immigration initiated at $z$, with immigration mechanism given by $\psi_0^\prime(\lambda)$ and the  auxiliary L\'evy process (to the environment) is given by $(\xi, \mathbb{P}^{(e,1)})$. Moreover,  for any $\lambda\geq 0$ and $ t> 0$, we have
	\[
		\mathbb{E}^{\natural}_z\Big[e^{-\lambda Z_t}\Big] = \mathbb{E}^{(e,1)} \left[\exp\left\{-zh_{0,t}(\lambda) -\int_{0}^{t}\psi_0'(h_{s,t}(\lambda))\mathrm{d} s\right\}\right].
	\]

		\end{itemize}
		\end{theorem}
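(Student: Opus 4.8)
The strategy is the classical route for conditioning a subcritical branching process to survive (as in Lambert \cite{lambert2007} for CSBPs), the new ingredient being that the relevant intrinsic martingale factorises into an Esscher factor for the environment and a size-bias factor for the branching mechanism. \textbf{Step 1 (the martingale and the candidate limit).} First I would verify that $D_t=Z_te^{-\Phi_\xi(1)t}$ is a $(\mathcal{F}_t)_{t\ge0}$-martingale under $\mathbb{P}_z$: combining the Markov property of $Z$, the independence between $\mathcal{F}_t$ and the shifted environment $(S_{t+\cdot}-S_t)$, and the quenched identity $\mathbb{E}_x[Z_s\,|\,S]=xe^{\xi_s}$ from \eqref{martingquenched}, one gets $\mathbb{E}_z[Z_{t+s}\,|\,\mathcal{F}_t]=Z_t\,\mathbb{E}[e^{\xi_s}]=Z_te^{\Phi_\xi(1)s}$. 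Since $D_0=z$, the prescriptions $\mathbb{P}^\natural_z(\Lambda):=\mathbb{E}_z[(D_t/z)\mathbf{1}_\Lambda]$, $\Lambda\in\mathcal{F}_t$, form a consistent family and define a probability measure on $\mathcal{F}_\infty$ (statement (ii), modulo showing in Step 2 that it is the limit in (i)). I would also record the factorisation
\[
\frac{D_t}{z}=\Big(\frac{Z_te^{-\xi_t}}{z}\Big)\,M^{(1)}_t ,
\]
which exhibits this change of measure as the Esscher transform \eqref{eq_medida_Essher} at $\theta=1$ on the environment, composed, conditionally on the environment, with the Doob transform of the quenched CSBP by its intrinsic martingale $Z_te^{-\xi_t}/z$.

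\textbf{Step 2 (convergence of the conditioned laws).} Fix $t\ge0$, $\Lambda\in\mathcal{F}_t$, and write $g_u(x):=\mathbb{P}_x(Z_u>0)$. Since extinction is a.s.\ under \eqref{eq:grey} (Corollary 4.4 in \cite{he2018continuous}), $\{T_0>s\}=\{Z_s>0\}$, so the Markov property at time $t$ gives, for $s>t$,
\[
\mathbb{P}_z(\Lambda\,|\,T_0>s)=\frac{\mathbb{E}_z\big[\mathbf{1}_\Lambda\,g_{s-t}(Z_t)\big]}{g_s(z)} .
\]
By Theorem \ref{theo_stronglycase} (resp.\ Theorem \ref{theo_interm}), for every fixed $x>0$ one has $g_u(x)=x\,e^{\Phi_\xi(1)u}\,\varphi(u)\,(1+o(1))$ as $u\to\infty$, with $\varphi$ constant in the strongly subcritical regime and $\varphi(u)=c\,u^{-1/2}$ in the intermediate one; as $\varphi(s-t)/\varphi(s)\to1$, it follows that, $\mathbb{P}_z$-a.s.,
\[
R_s:=\frac{g_{s-t}(Z_t)}{g_s(z)}\;\longrightarrow\;\frac{Z_t}{z}e^{-\Phi_\xi(1)t}=\frac{D_t}{z},\qquad s\to\infty .
\]
Now $R_s\ge0$, $\mathbb{E}_z[R_s]=1$ by the tower property, and the limit has $\mathbb{P}_z$-expectation $1$ too because $\mathbb{E}_z[Z_t]=ze^{\Phi_\xi(1)t}$; hence Scheff\'e's lemma upgrades the a.s.\ convergence to convergence in $L^1(\mathbb{P}_z)$, and therefore $\mathbb{P}_z(\Lambda\,|\,T_0>s)=\mathbb{E}_z[\mathbf{1}_\Lambda R_s]\to\mathbb{E}_z[(D_t/z)\mathbf{1}_\Lambda]=\mathbb{P}^\natural_z(\Lambda)$, which is (i); together with Step 1 this also yields (ii).

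\textbf{Step 3 (identification as a CBLE with immigration).} Since $e^{-\lambda Z_t}$ is bounded and $\mathcal{F}_t$-measurable, and $\mathbb{E}_z[e^{-\lambda Z_t}\,|\,S]=e^{-zh_{0,t}(\lambda)}$ by \eqref{eq_transLaplace},
\[
\mathbb{E}^\natural_z\big[e^{-\lambda Z_t}\big]=\frac{e^{-\Phi_\xi(1)t}}{z}\,\mathbb{E}_z\big[Z_te^{-\lambda Z_t}\big]=-\frac{e^{-\Phi_\xi(1)t}}{z}\,\partial_\lambda\,\mathbb{E}^{(e)}\big[e^{-zh_{0,t}(\lambda)}\big]=e^{-\Phi_\xi(1)t}\,\mathbb{E}^{(e)}\Big[\partial_\lambda h_{0,t}(\lambda)\,e^{-zh_{0,t}(\lambda)}\Big].
\]
Differentiating the backward equation \eqref{eq_backward} in $\lambda$ and solving the resulting linear integral equation in $s$ (multiply by $e^{\xi_s}$ to recognise a first-order linear ODE with terminal condition $e^{\xi_t}$) gives the closed form
\[
\partial_\lambda h_{0,t}(\lambda)=e^{\xi_t}\exp\Big\{-\int_0^t\psi_0'\big(h_{r,t}(\lambda)\big)\,\mathrm{d}r\Big\}.
\]
Substituting, using $e^{-\Phi_\xi(1)t}e^{\xi_t}=M^{(1)}_t$, and applying the Esscher transform \eqref{eq_medida_Essher} to pass from $\mathbb{E}^{(e)}[M^{(1)}_t\,\cdot\,]$ to $\mathbb{E}^{(e,1)}[\,\cdot\,]$, we arrive at
\[
\mathbb{E}^\natural_z\big[e^{-\lambda Z_t}\big]=\mathbb{E}^{(e,1)}\left[\exp\Big\{-z\,h_{0,t}(\lambda)-\int_0^t\psi_0'\big(h_{s,t}(\lambda)\big)\,\mathrm{d}s\Big\}\right],
\]
which is precisely the Laplace transform \eqref{eq:laplaceimigration} of a CBLE with immigration started at $z$, with immigration mechanism $\eta=\psi_0'$ and auxiliary environment $(\xi,\mathbb{P}^{(e,1)})$. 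The path-level statement then follows from the factorisation of Step 1: the measure change is the Esscher transform on the environment composed, quenched on the environment, with Lambert's \cite{lambert2007} $h$-transform of the CSBP, which turns it into a CSBP with immigration with mechanism $\psi_0'$; since a CBLE with immigration is characterised by this data (Theorem 5.3 in \cite{he2018continuous}), (iii) follows.

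\textbf{Main obstacle.} The delicate point is the interchange of limit and expectation in Step 2: it genuinely uses the \emph{exact} asymptotics of $\mathbb{P}_\cdot(Z_u>0)$ supplied by Theorems \ref{theo_stronglycase} and \ref{theo_interm} — not merely the exponential rate — so that the normalising prefactors cancel in the ratio $R_s$ and the limiting variable has the same unit expectation, which is what allows a Scheff\'e argument instead of a harder uniform-integrability or domination estimate. A secondary care point is making the factorisation argument of Steps 1 and 3 fully rigorous, i.e.\ verifying that the branching-with-immigration (spine) structure is really preserved under the Doob transform, so that (iii) is an identity of path laws and not only of one-dimensional marginals.
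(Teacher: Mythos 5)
Your proof is correct and follows the same overall structure as the paper: (ii) via the quenched martingale identity \eqref{martingquenched}, (i) via the Markov property and the ratio $R_s$ with the asymptotics from Theorems \ref{theo_stronglycase}/\ref{theo_interm}, and (iii) via the explicit differentiation of $\lambda\mapsto h_{0,t}(\lambda)$ and the Esscher transform. The one genuine departure is in the interchange-of-limits step for (i): the paper dominates the ratio $R_s=\mathbb{P}_{Z_t}(T_0>s)/\mathbb{P}_z(T_0>t+s)$ by a linear-in-$Z_t$ majorant derived from the $t\to\infty$ asymptotics and then invokes the Dominated Convergence Theorem, whereas you instead observe that the $R_s$ are nonnegative with $\mathbb{E}_z[R_s]=1$, converge a.s.\ to $D_t/z$ which also has unit expectation, and then apply Scheff\'e's lemma to upgrade to $L^1$ convergence. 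The Scheff\'e route is slightly cleaner: it sidesteps the need for a domination bound that holds for all $s$ with a threshold independent of the random variable $Z_t$ (a point the paper glosses over, though it can be fixed, e.g.\ via the quenched branching property giving $\mathbb{P}_x(Z_s>0\,|\,S)\le \max(x,1)\,\mathbb{P}_1(Z_s>0\,|\,S)$), and it works verbatim in both the strongly and intermediate regimes since the subexponential prefactor cancels in the ratio. The martingale factorisation $D_t/z=(Z_te^{-\xi_t}/z)\,M^{(1)}_t$ you record in Step 1 is not written explicitly in the paper but usefully explains why (iii) produces exactly the Esscher-tilted environment together with the $\psi_0'$-immigration; your interpretation of the path-level statement in Step 3 via Lambert's spine construction is consistent with the paper's appeal to the characterisation \eqref{eq:laplaceimigration} from \cite{he2018continuous}.
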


The remainder of this paper is devoted to the proofs of the main results.

	\subsection{Comments about our results} 
Conditions \eqref{eq:hipstrongcase} and \eqref{eq:assumpinter}  seem to be optimal. For instance in  the strongly subcritical  regime, it turns out that  that the probability of survival   $e^{-\Phi_{\xi}(1)t}\mathbb{P}_{z}(Z_t>0) $ is exactly $\mathbb{Q}_z[1/X_t]$, where $(X,\mathbb{Q}_z)$ is a CBLE with immigration starting from $z$ with branching and immigration mechanisms $(\psi_0, \psi_0^\prime)$; and with auxiliary L\'evy process (to the environment) given by $(\xi, \mathbb{P}^{(e,1)})$.  In order to obtain  the limit, some control is required in the  Laplace transform of $X_t$
 and it is here where condition \eqref{eq:hipstrongcase} appears by eliminating the term which depends on the starting population $z$, since it  goes to 0 as $t$ increases. Moreover, we also remark that   condition \eqref{eq:hipstrongcase} is not required in the discrete setting  due to some  monotonicity properties associated to the random walk and the sequence of probability generating functions  (see Lemma 2.1 and 2.3 in \cite{geiger2003limit}), properties that are lost in the continuous setting.

 Regarding Assumption \ref{eq_psiupperbound}, it  seems quite difficult to get rid of it since   some explicit knowledge or properties of the functional $h_{0,t}(\infty)$ are needed to control the behaviour of 
\[
e^{-\xi_t}\mathbb{P}_{z}\big(Z_t>0\ \big|\big.  \ \xi\big), \qquad\textrm{for}\quad z>0,
\]
under favourable environments, as $t$ increases. In other words, we can rewrite the probability of survival in a favourable environment as follows 
	\begin{eqnarray*}
	e^{-t\Phi_\xi(1)}\mathbb{P}_{(z,x)}\left(Z_t>0,\  \sup_{0\leq s\le t}\xi_{s}\geq y\right) &=&  \mathbb{E}_x^{(e,1)}\Big[e^{-\xi_t}\left(1-\exp\big\{-zh_{0,t}(\infty)\big\}\right)\mathbf{1}_{\{ \overline{\xi}_{t}\geq y\}}\Big].
\end{eqnarray*}
In other words to control the right hand side of the previous identity  is somehow quite involved, contrary to the discrete setting where the quenched probability of survival can be upper bounded using a first moment estimate since the event of survival is equal to the event of the current population being bigger or equal to one, an strategy which cannot be used in our setting. 

Finally, we believe that it is possible to obtain  Theorem \ref{theo_interm} with Assumption \ref{eq_moments} but with the less restrictive condition that $\vartheta=1$ with $\Phi_\xi'(0)<0$ and  $\Phi_{\xi}'(1)=0$. It is important to note that even in the case when the branching mechanism is stable, a deeper analysis is required to deduce such result. More precisely, when 
$\psi_0(\lambda)= C\lambda^{1+\beta}$  with $\beta\in (0,1)$ and $C>0$, we have
$$e^{-\xi_t}\mathbb{P}_{z}\big(Z_t>0\ \big|\big.  \ \xi\big)= e^{-\xi_t}\left(1-\exp\left\{-z\left(\beta C\int_{0}^{t}e^{\beta \xi_u}\mathrm{d} u\right)^{-1/\beta}\right\}\right).   $$
Even though the exponential functional of a L\'evy process is a well  studied object, it seems difficult to control the previous random variable, under $\mathbb{P}^{(e,1)}$, with the restriction that $\vartheta=1$ due to the nature of the exponential functional  together with  $e^{-\xi_t}$. 

We conjecture that, under the so-called Spitzer's condition 
$$\frac{1}{t}\int_{0}^{t} \mathbb{P}^{(e,1)}(\xi_s\geq 0)\mathrm{d} s \to \rho \in(0,1), \qquad \text{as} \quad t \to \infty,$$
together with Assumptions  \ref{eq_xlogx}, \ref{eq:grey} and \eqref{eq:assumpinter}, the survival probability must behave as follows:  for  $z>0$,
\begin{equation*}
	e^{-\Phi_{\xi}(1)t} \mathbb{P}_{z}(Z_t>0)\sim z\mathfrak{B}_3t^{-\rho}\ell(t), \quad \textrm{as}\quad t\to \infty,
\end{equation*}
where $\mathfrak{B}_3$ is a positive constant and $\ell$ is a slowly varying function at $\infty$.

\section{Proofs}\label{sec:stronglyregime}
\subsection{Preliminaries on L\'evy processes}\label{prelimLP}
 	In this section, we briefly recall some important facts of L\'evy processes and its fluctuation theory  that we will require in what follows. Recall that  $\mathbb{P}^{(e)}_x$  denotes the law of the L\'evy process $\xi$ 
 	starting from $x\in \mathbb{R}$ and when $x=0$, we use the notation $\mathbb{P}^{(e)}$ for $\mathbb{P}^{(e)}_0$. The dual process $\widehat{\xi}=-\xi$ is also a L\'evy process satisfying that for any fixed time $t>0$, the processes 
 	\begin{equation}\label{eq_lemmaduality}
 		(\xi_{(t-s)^-}-\xi_{t}, 0\le s\le t)\qquad  \textrm{and}\qquad (\widehat{\xi}_s, 0\le s\le t),
 	\end{equation}
 	have the same law, with the convention that $\xi_{0^-}=\xi_0$ (see for instance Lemma 3.4 in Kyprianou \cite{kyprianou2014fluctuations}). For every $x\in \mathbb{R}$,  let $\widehat{\mathbb{P}}_x^{(e)}$ be the law of $x+\xi$ under $\widehat{\mathbb{P}}^{(e)}$, that is the law of $\widehat{\xi}$ under $\mathbb{P}_{-x}^{(e)}$. In the sequel, we assume that $\xi$ is not a compound Poisson process to avoid the possibility  that in this case the process visits the same maxima or minima at distinct times which can make our analysis more involved.
 
 	Let us introduce the running infimum  and supremum  of $\xi$, by
 	\begin{equation*}
 		\underline{\xi}_t = \inf_{0\leq s\leq t} \xi_s \qquad \textrm{ and } \qquad \overline{\xi}_t = \sup_{0 \leq s \leq t} \xi_s, \qquad \textrm{for} \qquad t \geq 0.
 	\end{equation*}
 	For our purposes, we also introduce and provide some useful properties of the  L\'evy process reflected at their running infimum and supremum. Recall that the 
 	reflected processes $\xi-\underline{\xi}$ and  $\overline{\xi}-\xi$  are Markov processes with respect to the  filtration $(\mathcal{F}^{(e)}_t)_{t\geq 0}$ and whose semigroups
 	satisfy the Feller property (see for instance Proposition VI.1 in the monograph of Bertoin \cite{bertoin1996levy}).  We 
 	denote by $L=(L_t, t \geq 0 )$ and $\widehat{L}=(\widehat{L}_t, t \geq 0 )$  for  the local times of $\overline{\xi}-\xi$ and $\xi-\underline{\xi}$ at $0$, respectively,  in the sense of Chapter IV in \cite{bertoin1996levy}. If $0$ is regular for $(-\infty,0)$ or regular downwards, i.e.
 	\[
 	\mathbb{P}^{(e)}(\tau^-_0=0)=1,
 	\] 
 	where $\tau^{-}_0=\inf\{s> 0:  \xi_s\le 0\}$, then $0$ is regular for the reflected process $\xi-\underline{\xi}$ and then, up to a multiplicative constant, $\widehat{L}$ is the unique additive functional of the reflected process whose set of increasing points is $\{t:\xi_t=\underline{\xi}_t\}$. If $0$ is not regular downwards then the set $\{t:\xi_t=\underline{\xi}_t\}$ is discrete and we define the local time $\widehat{L}$ as the counting process of this set. The same properties holds for $L$ by duality, i.e. if $0$ is regular upwards then, up to a multiplicative constant, $L$ is the unique additive functional  whose set of increasing points is $\{t:\xi_t=\overline{\xi}_t\}$, otherwise $L$ is the counting process of this set.
 	
 Let us denote by $L^{-1}$ and $\widehat{L}^{-1}$ the right continuous inverse of the local times $L$ and $\widehat{L}$, respectively.  The range of the inverse local times  $L^{-1}$ and $\widehat{L}^{-1}$, correspond to the sets of real times at which new maxima and new minima occur, respectively. Next, we introduce the so called increasing ladder height process by
 	\begin{equation}\label{defwidehatH}
 		H_t=\overline{\xi}_{L_t^{-1}}, \qquad t\ge 0.
 	\end{equation}
 	The pair $(L^{-1}, H)$ is a bivariate subordinator, as is the case of  the pair $(\widehat{L}^{-1}, \widehat{H})$ with
 	\[
 	\widehat{H}_t=-\underline{\xi}_{\widehat{L}_t^{-1}}, \qquad t\ge 0.
 	\]

 	Similarly to the critical case, which was studied by  Bansaye et al. \cite{bansaye2021extinction}, the asymptotic analysis  of the intermediate subcritical regime requires the notion of the renewal functions $U^{(\theta)}$ and $\widehat{U}^{(\theta)}$ under $\mathbb{P}^{(e,\theta)}$, which are associated to the supremum and infimum of $\xi$, respectively. More precisely,   , for all $x>0$, we introduce the renewal functions $U^{(\theta)}$ and $\widehat{U}^{(\theta)}$ as follows
 	\begin{equation}\label{eq_Utheta}
 		U^{(\theta)}(x) := \mathbb{E}^{(e,\theta)}\left[\int_{[0,\infty)} \mathbf{1}_{\left\{\overline{\xi}_t\leq x\right\}} \mathrm{d} L_t\right] \quad 
 		\textrm{and}\quad
 		\widehat{U}^{(\theta)}(x) := \mathbb{E}^{(e,\theta)}\left[\int_{[0,\infty)} \mathbf{1}_{\left\{\underline{\xi}_t\geq -x\right\}} \mathrm{d} \widehat{L}_t\right].
 	\end{equation}
The renewal functions are identically 0 on $(-\infty, 0]$,  strictly positive on $(0,\infty)$  and satisfy 
 	\begin{equation}
 		\label{grandO}
 		U^{(\theta)}(x)\leq C_1 x \qquad\textrm{and}\qquad \widehat{U}^{(\theta)}(x)\leq C_2 x \quad \text{  for any } \quad x\geq 0,
 	\end{equation}
 	where $C_1, C_2$ are finite constants  (see for instance  Lemma 6.4 and Section 8.2 in the monograph of Doney 
 	\cite{doney2007fluctuation}). 
%

\subsection{Strongly subcritical regime}

\begin{proof}[Proof of Theorem \ref{theo_stronglycase}]
	Let $z>0$ and $x\in \mathbb{R}$; and denote by $\mathbb{P}_{(z,x)}$ for the law of the couple $(Z, \xi)$ starting from $z$ and $x$, respectively. 	We begin by noting that, conditioning on the environment and then  using the exponential change of measure given in \eqref{eq_medida_Essher} with $\theta=1$, allow us to deduce
	\[\begin{split}
		e^{-\Phi_{\xi}(1)t}\mathbb{P}_{z}(Z_t>0) &=  e^{-\Phi_{\xi}(1)t} \mathbb{E}^{(e)}\Big[e^{-\xi_t}e^{\xi_t}\mathbb{P}_{(z,0)}\big(Z_t>0\ \big|\big. \ \xi\big)\Big] \\ &= \mathbb{E}^{(e,1)}\Big[e^{-\xi_t}\mathbb{P}_{(z,0)}\big(Z_t>0\ \big|\big. \ \xi\big)\Big]. 
	\end{split}\]
	Recall from  \eqref{eq_transLaplace}, that for $\lambda\geq 0$ and $t\geq 0$, the random cumulant  $h_{0,t}(\lambda)=e^{-\xi_0}v_t(s,\lambda e^{\xi_t},\xi)$ satisfies 
	$$ \mathbb{E}_{(z,0)}\big[e^{-\lambda Z_t}\  \big|\big. \  \xi\big]=\exp\{-zh_{0,t}(\lambda)\}.$$
	Now, we denote 
	\begin{equation*}
		G_t(\lambda) := e^{-\xi_t}\Big(1-\exp\{-zh_{0,t}(\lambda)\}\Big), \quad \quad \text{for}\quad \lambda, t\geq 0,
	\end{equation*}
and observe that the quenched survival probability of  $Z$ is given by 
\[
\mathbb{P}_{(z,0)}\big(Z_t>0 \  \big|\big. \  \xi\big)=1-\exp\{-zh_{0,t}(\infty)\}.
\]
 In other words, 
	\[
	G_t(0)= 0 \quad \quad \quad  \text{and}\quad \quad  \quad G_t(\infty) = e^{-\xi_t}\mathbb{P}_{(z,0)}\big(Z_t>0 \  \big|\big. \  \xi\big).
	\]
	Since the map $\lambda \mapsto h_{0,t}(\lambda)$ is differentiable, then so does $G_t(\cdot)$.
	In view of the above arguments, we deduce  
	\begin{equation}\label{eq_limZ}
		e^{-\Phi_{\xi}(1)t}\mathbb{P}_{z}(Z_t>0) =  \mathbb{E}^{(e,1)}\big[G_t(\infty)\big] = \mathbb{E}^{(e,1)}\left[\int_{0}^{\infty} G_t'(\lambda)\mathrm{d}\lambda\right]  =  \int_{0}^{\infty} \mathbb{E}^{(e,1)}\big[G_t'(\lambda)\big]\mathrm{d} \lambda,
	\end{equation}
	where in the last equality, the expectation and the integral may be exchanged thanks to Fubini's Theorem. 
	Hence in order to deduce our result, we would like to take the limit, as $t\to \infty$, in the above equality and then make use of  the Dominated Convergence Theorem in order to interchange the  limit with the integral on the right-hand side.
	With this purpose in mind,  we need to find a function $g(\lambda)$ such that $\mathbb{E}^{(e,1)}\big[|G'_t(\lambda)|\big]\leq g(\lambda), \ \text{for all} \ t\geq 1 $, and  
	\begin{equation}\label{eq_ht}
		\int_{0}^{\infty} g(\lambda) \mathrm{d} \lambda <\infty.
	\end{equation}
	First, we analyse  $\mathbb{E}^{(e,1)}\big[|G'_t(\lambda)|\big]$. Note  from the definition of $G_t(\lambda)$ that
	\begin{equation}\label{eq:derivada}
		G_t'(\lambda) = ze^{-\xi_t}\exp\big\{-zh_{0,t}(\lambda)\big\} h'_{0,t}(\lambda)=z\exp\big\{-zh_{0,t}(\lambda)\big\} \frac{\mathrm{d} }{\mathrm{d}u}v_t(0, u,\xi)\Big|\Big._{u=\lambda e^{\xi_t}},
	\end{equation}
	where in the last equality we  recall that $h_{0,t}(\lambda)=e^{-\xi_0}v_t(0,\lambda e^{\xi_t},\xi)$.
	Moreover, by differentiating with respect to $\lambda$ on both sides of the backward differential equation \eqref{eq_BDE}, we obtain
	\begin{equation*}
		\frac{\mathrm{d}}{\mathrm{d} \lambda} v_t(0,\lambda, \xi) = 1-\int_{0}^{t} \psi'_0\big(e^{-\xi_s}v_t(s,\lambda,\xi)\big)\frac{\mathrm{d}}{\mathrm{d} \lambda} v_t(s,\lambda,\xi)\mathrm{d} s.
	\end{equation*}
	Thus solving the above equation, we get
	\begin{equation}\label{eq:derivatev}
		\frac{\mathrm{d}}{\mathrm{d} \lambda} v_t(0,\lambda, \xi) = \exp\left\{-\int_{0}^{t} \psi'_0\big(e^{-\xi_s}v_t(s,\lambda,\xi)\big)\mathrm{d} s\right\}.
	\end{equation}
	Then, it follows
	\begin{equation*}
		\mathbb{E}^{(e,1)}\big[|G_t'(\lambda)|\big]= \mathbb{E}^{(e,1)}\big[G_t'(\lambda)\big]=  z\mathbb{E}^{(e,1)}\left[\exp\left\{-zh_{0,t}(\lambda)-\int_{0}^{t} \psi'_0\big(h_{s,t}(\lambda)\big)\mathrm{d} s\right\}\right].
	\end{equation*}
	In other words, according to identity \eqref{eq:laplaceimigration}, $G_t'(\lambda)$ is the Laplace transform of a CBLE with immigration and whose  immigration mechanism  is given by $\psi_0'$. 
	
	In order to find the  integrable function $g(\lambda)$ which dominates $\mathbb{E}^{(e,1)}[|G'_t(\lambda)|],$ for $ t\geq 1$, we use another useful characterisation of $\mathbb{E}^{(e,1)}[G_t'(\lambda)]$.  Recall  that the homogeneous L\'evy process $\Xi$  defined in \eqref{eq_levytiemponeg}, allows to extend the definition of the map $s\mapsto h_{s,0}^{\Xi}(\lambda)$ for $s\leq 0$. The latter is the unique positive  pathwise solution to \eqref{eq_BDEhneg}. We write, for $\lambda>0$ and $t\geq 0$,

	
	\begin{equation*}
		\mathbb{E}^{(e,1)}\big[G_t'(\lambda)\big]=z\mathbf{E}^{(e,1)}\left[\exp\left\{-zh_{-t,0}^{\Xi}(\lambda)-\int_{-t}^{0} \psi'_0\big(h_{s,0}^{\Xi}(\lambda)\big)\mathrm{d} s\right\}\right],
	\end{equation*}
where  $\mathbf{P}^{(e,1)}$ denotes the law of the homogeneous L\'evy process $\Xi$ constructed in \eqref{eq_levytiemponeg} but with $(\xi,\mathbb{P}^{(e,1)})$. Now, we introduce the function
	\begin{equation*}
		g(\lambda) := \mathbf{E}^{(e,1)}\left[\exp\left\{-\int_{-1}^{0} \psi'_0\big(h_{s,0}^{\Xi}(\lambda)\big)\mathrm{d} s\right\}\right].
	\end{equation*}
	Using the latter characterisation of  $\mathbb{E}^{(e,1)}[G_t'(\lambda)]$ together with the non-negative property of $\psi'_0$ and $h_{-t,0}^{\Xi}(\lambda)$, we deduce the following inequality, for $t\geq 1$,
	\[
	\begin{split}
		\mathbb{E}^{(e,1)}\big[|G_t'(\lambda)|\big] &\leq z \mathbf{E}^{(e,1)}\left[\exp\left\{-\int_{-t}^{0} \psi'_0\big(h_{s,0}^{\Xi}(\lambda)\big)\mathrm{d} s\right\}\right]\leq  zg(\lambda).
	\end{split}\]
	Furthermore, observe from  assumption \eqref{eq:hipstrongcase}, that the function $g(\cdot)$ is  integrable. Thus, we appeal to the Dominated Convergence Theorem in \eqref{eq_limZ} and get
	\[\begin{split}
		\lim\limits_{t\to\infty}e^{-\Phi_{\xi}(1)t}\mathbb{P}_{z}(Z_t>0)  & = \lim\limits_{t\to\infty} \int_{0}^{\infty} \mathbb{E}^{(e,1)}\big[G_t'(\lambda)\big]\mathrm{d} \lambda \\ &=  \int_{0}^{\infty} \lim\limits_{t\to\infty} \mathbb{E}^{(e,1)}\big[G_t'(\lambda)\big]\mathrm{d} \lambda = \int_{0}^{\infty}  \mathbb{E}^{(e,1)} \left[\lim\limits_{t\to\infty}G_t'(\lambda)\right]\mathrm{d} \lambda,
	\end{split}\]
	where we have used again the Dominated Convergence Theorem in the last equality since the inequality $|G_t'(\lambda)|\leq z$ holds for all $t\geq 1$. 
	
	On the other hand, we also note that assumption  $\Phi_\xi'(1)<0$ implies $\xi_t\to -\infty$ as $t\to\infty$, $\mathbb{P}^{(e,1)}$-a.s. The latter implies that \ $\Xi_t\to \infty$\  as  $t\to-\infty$, $\mathbf{P}^{(e,1)}$-a.s. Next, thanks to the monotonicity property (see Proposition 2.3 in \cite{he2018continuous}) of  the map $-t \mapsto v_0(-t,\lambda,\Xi)$, we have $$h_{-t,0}^{\Xi}(\lambda)=e^{-\Xi_{-t}}v_{0}(-t,\lambda,\Xi)\leq e^{-\Xi_{-t}}v_{0}(0,\lambda,\Xi)=e^{-\Xi_{-t}}\lambda.$$  It then follows that $\lim_{t\to\infty}h_{-t,0}^{\Xi}(\lambda)=0$, $\mathbf{P}^{(e,1)}$-a.s., and thus
	\[\begin{split}
		\mathbb{E}^{(e,1)}\left[\lim\limits_{t\to\infty}G_t'(\lambda)\right] &=z \mathbf{E}^{(e,1)}\left[\lim\limits_{t\to\infty}\exp\left\{-zh_{-t,0}^{\Xi}(\lambda)-\int_{-t}^{0}\psi_0'\big(h_{s,0}^{\Xi}(\lambda)\big)\mathrm{d} s\right\}\right] \\ 
		& = z\mathbf{E}^{(e,1)}\left[\exp\left\{-\int_{-\infty}^{0}\psi_0'\big(h_{s,0}^{\Xi}(\lambda)\big)\mathrm{d} s\right\}\right].
	\end{split}
	\]
	The proof is complete once we have shown that
	$$0<\mathfrak{B}_1:=\int_{0}^{\infty}  \mathbf{E}^{(e,1)}\left[\exp\left\{-\int_{-\infty}^{0}\psi_0'\big(h_{s,0}^{\Xi}(\lambda)\big)\mathrm{d} s\right\}\right]\mathrm{d} \lambda <\infty.$$
	From the discussion at the end of subsection \ref{sec_defandprop} (see also Corollary 5.7 in He et al. \cite{he2018continuous} or the proof of  Theorem 5.6 in the same reference), we see that under the moment condition  \eqref{eq_xlogx}, we have
	\begin{equation*}
		\mathbf{E}^{(e,1)}\left[\exp\left\{-\int_{-\infty}^{0}\psi_0'\big(h_{s,0}^{\Xi}(\lambda)\big)\mathrm{d} s\right\}\right]>0.
	\end{equation*}
	Therefore 
	\[\begin{split}
		\int_{0}^{\infty}  \mathbb{E}^{(e,1)}\left[\lim\limits_{t\to\infty}G_t'(\lambda)\right]\mathrm{d} \lambda &=z\int_{0}^{\infty}\mathbf{E}^{(e,1)}\left[\exp\left\{-\int_{-\infty}^{0}\psi_0'\big(h_{s,0}^{\Xi}(\lambda)\big)\mathrm{d} s\right\}\right]\mathrm{d} \lambda \\
		& =z\mathfrak{B}_1>0.
	\end{split}\]
	Finally, since $\psi'_0$ is non-negative and the condition in \eqref{eq:hipstrongcase}, we obtain the finiteness of $\mathfrak{B}_1$, that is   
	$$ \mathfrak{B}_1 \leq \int_{0}^{\infty} \mathbf{E}^{(e,1)}\left[\exp\left\{-\int_{-1}^{0}\psi_0'\big(h_{s,0}^{\Xi}(\lambda)\big)\mathrm{d} s\right\}\right] \mathrm{d} \lambda =\int_{0}^{\infty} g(\lambda) \mathrm{d} \lambda < \infty. $$
	This completes the proof. 
	
\end{proof}

\subsubsection{The stable case}\label{sec:stablecase}
Now, let us  compute the constant  $\mathfrak{B}_1$ in the stable case and check that it coincides with the constant that appears  in Theorem 5.1 in Li and Xu \cite{li2018asymptotic}. To this end,  we recall the following identity of the previous proof  
\[z \mathbf{E}^{(e,1)}\left[\exp\left\{-\int_{-\infty}^{0}\psi_0'(h_{s,0}^{\Xi}(\lambda))\mathrm{d} s\right\}\right] = \lim\limits_{t\to \infty}\mathbb{E}^{(e,1)}\left[ G_t'(\lambda)\right],\]
where $G_t'(\lambda)$ is given  in \eqref{eq:derivada}. We also recall that in  this case  $\psi_0(\lambda)= C\lambda^{1+\beta}$ with $\beta\in (0,1)$ and $C>0$, from which  we observe that the $x\log x$-moment condition \eqref{eq_xlogx} and Grey's condition \eqref{eq:grey} are clearly satisfied. Moreover the  backward differential equation in \eqref{eq_BDE} can be solved explicitly (see e.g. Section 5 in \cite{he2018continuous}), that is
$$ v_t(s,\lambda, \xi) = \Big(\lambda^{-\beta} + \beta C\texttt{I}_{s,t}(\beta \xi)\Big)^{-1/\beta},$$
where $\texttt{I}_{s,t}(\beta \xi)$  denotes the exponential functional of  the L\'evy process $\beta \xi$, i.e., 
\begin{equation}\label{eq_expfuncLevy}
	\texttt{I}_{s,t}(\beta \xi):=\int_{s}^{t}e^{-\beta \xi_u} \mathrm{d} u, \quad \quad  0\leq s\leq t. 
\end{equation}
In other words, we obtain 
\[\begin{split}
	\mathbf{E}^{(e,1)}&\left[\exp\left\{-\int_{-\infty}^{0}\psi_0'(h_{s,0}^{\Xi}(\lambda))\mathrm{d} s\right\}\right] \\ 
	&\hspace{1cm}= \lim\limits_{t\to\infty}\mathbb{E}^{(e,1)}\left[ \exp\{-zv_t(0,\lambda e^{\xi_t},\xi)\}\Big(1+(\lambda e^{\xi_t})^\beta \beta C \texttt{I}_{0,t}(\beta \xi)\Big)^{-\frac{1}{\beta}-1}\right].
\end{split}\]
Now appealing to  duality, see  \eqref{eq_lemmaduality}, we get
\begin{equation*}
	e^{\beta \xi_t}\texttt{I}_{0,t}(\beta \xi) = \int_{0}^{t} e^{-\beta (\xi_u-\xi_t)} \mathrm{d} u \stackrel{(d)}{=} \int_{0}^{t} e^{\beta \xi_u} \mathrm{d} u =\texttt{I}_{0,t}(-\beta \xi),
\end{equation*}
and 
\[\begin{split}
	v_t(0,\lambda e^{\xi_t}, \xi) = e^{\xi_t}\left(\lambda^{-\beta} + \beta Ce^{\beta \xi_t}\texttt{I}_{0,t}(\beta \xi)\right)^{-1/\beta}  \stackrel{(d)}{=} e^{\xi_t}\left(\lambda^{-\beta} + \beta C\texttt{I}_{0,t}(-\beta \xi)\right)^{-1/\beta} .
\end{split}\]
Hence
\[\begin{split}
	\mathbf{E}^{(e, 1)}&\left[\exp\left\{-\int_{-\infty}^{0}\psi_0'(h_{s,0}^{\Xi}(\lambda))\mathrm{d} s\right\}\right] \\ &= \lim\limits_{t\to\infty}\mathbb{E}^{(e,1)}\left[ \exp\left\{-ze^{\xi_t}\left(\lambda^{-\beta} + \beta C\texttt{I}_{0,t}(-\beta \xi)\right)^{-1/\beta}\right\}\Big(1+\lambda^\beta \beta C \texttt{I}_{0,t}(-\beta \xi)\Big)^{-\frac{1}{\beta}-1}\right].
\end{split}\]
Furthermore since $\xi_t \to -\infty$, as $t\to \infty$, $\mathbb{P}^{(e,1)}$-a.s., then $\mathtt{I}_{0,\infty}(-\beta\xi)$ is finite $\mathbb{P}^{(e,1)}$-a.s.  Thus, it follows that  $$\lim\limits_{t\to\infty}\exp\left\{-ze^{\xi_t}\left(\lambda^{-\beta} + \beta C\texttt{I}_{0,t}(-\beta \xi)\right)^{-1/\beta}\right\}=1, \qquad \mathbb{P}^{(e,1)}-\text{a.s.},$$ 
which yields,
\[\begin{split} \mathbf{E}^{(e,1)}\left[\exp\left\{-\int_{-\infty}^{0}\psi_0'(h_{s,0}^{\Xi}(\lambda))\mathrm{d} s\right\}\right]  =  \mathbb{E}^{(e,1)}\left[\Big(1+\beta C \lambda^{\beta} \texttt{I}_{0,\infty}(-\beta \xi)\Big)^{-\frac{1}{\beta}-1}\right]\end{split}.\]
Next, we claim that condition \eqref{eq:hipstrongcase} is satisfied  under assumption \eqref{eq_moments} with $\vartheta=1$. We prove this claim below.  Hence, using Fubini\'s Theorem we deduce
\[\begin{split} \mathfrak{B}_1&=  \int_{0}^{\infty} \mathbb{E}^{(e,1)}\left[\left(1+\beta C \lambda^{\beta} \texttt{I}_{0,\infty}(-\beta \xi)\right)^{-\frac{1}{\beta}-1}\right]\mathrm{d} \lambda \\ &= (\beta C)^{-1/\beta} \mathbb{E}^{(e,1)}\left[\left(\int_{0}^{\infty}e^{\beta \xi_u}\mathrm{d} u\right)^{-1/\beta}\right], \end{split}\]
where in the last equality  we have solved the integral with respect to $\lambda$.

Finally, we prove the claim that  condition \eqref{eq:hipstrongcase} is satisfied under Assumption \ref{eq_moments} with $\vartheta=1$. We first observe that the integral in condition \eqref{eq:hipstrongcase} can be rewritten as follows
\begin{equation*}
	\begin{split}
		\int_{0}^{\infty}\mathbf{E}^{(e,1)}\left[\exp\left\{-\int_{-1}^{0} \psi'_0\big(h^{\Xi}_{s,0}(\lambda)\big)\mathrm{d} s\right\}\right] \mathrm{d}\lambda& = \int_{0}^{\infty} \mathbb{E}^{(e,1)}\left[\Big(1+\lambda^\beta \beta C \texttt{I}_{0,1}(-\beta \xi)\Big)^{-\frac{1}{\beta}-1}\right] \mathrm{d} \lambda,\\& = \mathbb{E}^{(e,1)}\left[\left(\int_{0}^{1}e^{\beta \xi_u}\mathrm{d} u\right)^{-1/\beta}\right],
	\end{split}
\end{equation*}
where  in the last equality we have used again Fubini's Theorem. In particular, appealing once again to  duality \eqref{eq_lemmaduality}, we get
\begin{equation*}
	\begin{split}
		 \mathbb{E}^{(e,1)}\left[\left(\int_{0}^{1}e^{\beta \xi_u}\mathrm{d} u\right)^{-1/\beta}\right] = e^{-\Phi_{\xi}(1)}\mathbb{E}^{(e)}\left[\left(\int_{0}^{1}e^{-\beta\xi_u}\mathrm{d} u\right)^{-1/\beta}\right] = 	 e^{-\Phi_{\xi}(1)}\mathbb{E}^{(e)}\big[\mathtt{I}_{0,1}(\beta \xi)^{-1/\beta}\big].
	\end{split}
\end{equation*}
Moreover, under the exponential moment Assumption \ref{eq_moments} with $\vartheta=1$, from Lemma 2.2 in \cite{he2018continuous}, we deduce that
\begin{equation*}
	\mathbb{E}^{(e)}\Big[\mathtt{I}_{0,1}(\beta \xi)^{-1/\beta}\Big] \leq e^{2\Phi_\xi'(0)}\mathbb{E}^{(e)}\big[e^{\xi_1}\big],
\end{equation*} 
where the right-hand side is finite from our assumption. This prove our claim.

\subsection{Intermediate subcritical regime}\label{sec:intermediate}
The aim of this section is to show Theorem \ref{theo_interm}. Throughout this section, we assume that the underlying L\'evy process $\xi$ fulfils  conditions $\Phi_\xi'(0)<0$ and  $\Phi_\xi'(1)=0$. In other words, $\xi$ drifts to $-\infty$ under $\mathbb{P}^{(e)}$ and oscillates under  the probability measure  $\mathbb{P}^{(e,1)}$ defined by the Esscher transform \eqref{eq_Utheta}.

Before moving to the proof  of  Theorem  \ref{theo_interm}, we recall that, under the assumption that the L\'evy process $\xi$ possesses exponential moments of order $\vartheta>1$, the probability that the supremum of  $\xi$ stays below $0$ under $\mathbb{P}^{(e,1)}_{x}$, for $x<0$, satisfies
\begin{equation} \label{eq_int_cota_max}
	\mathbb{P}^{(e,1)}_{x}\big(\overline{\xi}_{t}< 0\big) \sim \sqrt{\frac{2}{\pi \Phi_\xi''(1)}} \mathbb{E}^{(e,1)}\big[H_1\big]U^{(1)}(-x)t^{-1/2},\ \qquad \text{as}\quad  t\to \infty,
\end{equation}
where we recall that $U^{(1)}$ denotes the renewal function, under $\mathbb{P}^{(e,1)}$, and $(H_t, t\geq 0)$ the ascending ladder process, (see Lemma 11 in Hirano \cite{hirano2001levy}). 

The proof of Theorem \ref{theo_interm} uses the same notation as in the proof of Theorem \ref{theo_stronglycase} and is based on the following two lemmas. The first of which
 tell us, under our general assumptions \eqref{eq_psiupperbound} and the exponential moments condition \eqref{eq_moments} with $\vartheta>1$, that only paths of L\'evy processes with a low supremum contribute to the probability of survival.
\begin{lemma}\label{lem_inter_cota0}
	Suppose that condition \eqref{eq_moments} holds with $\vartheta>1$.  We also assume that condition  \eqref{eq_psiupperbound} is satisfied.  Then for any $z> 0$, $x<0$ and $0<\delta<1$, we have 
	\begin{equation*}
		\lim\limits_{y \to \infty} \limsup_{t \to \infty} t^{1/2} e^{-\Phi_\xi(1)t} \mathbb{P}_{(z,x)}\Big(Z_t>0, \ \overline{\xi}_{t-\delta}\geq y\Big) =0.
	\end{equation*}
\end{lemma}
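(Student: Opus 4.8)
The plan is to bound the quenched survival probability on the event of a high running supremum and then integrate against the law of $\xi$ under the Esscher transform. First I would condition on the environment and write, using the quenched Laplace transform \eqref{eq_transLaplace} at $\lambda=\infty$,
\[
e^{-\Phi_\xi(1)t}\mathbb{P}_{(z,x)}\big(Z_t>0,\ \overline{\xi}_{t-\delta}\ge y\big)
=\mathbb{E}^{(e,1)}_{x}\Big[\mathbf{1}_{\{\overline{\xi}_{t-\delta}\ge y\}}\,e^{-\xi_t}\big(1-e^{-zh_{0,t}(\infty)}\big)\Big],
\]
exactly as in the proof of Theorem \ref{theo_stronglycase} but keeping the indicator of the favourable-environment event. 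Using $1-e^{-u}\le 1$ this is at most $\mathbb{E}^{(e,1)}_x[\mathbf{1}_{\{\overline{\xi}_{t-\delta}\ge y\}}e^{-\xi_t}]$; however, that crude bound loses the branching decay and will not be summable, so the key is to retain $h_{0,t}(\infty)$ and exploit assumption \eqref{eq_psiupperbound}.

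The central estimate is an upper bound on $h_{0,t}(\infty)$ coming from $\psi_0(\lambda)\ge C\lambda^{1+\beta}$. From the integral equation \eqref{eq_backward} (with $\lambda=\infty$), or equivalently by comparing \eqref{eq_BDE} with the pure stable branching mechanism $\lambda\mapsto C\lambda^{1+\beta}$ and invoking the monotonicity of solutions (Proposition 2.3 in \cite{he2018continuous}), one gets
\[
h_{0,t}(\infty)\ \ge\ \Big(\beta C\int_0^t e^{\beta\xi_u}\,\mathrm{d}u\Big)^{-1/\beta}
\ \ge\ \Big(\beta C\,t\,e^{\beta\overline{\xi}_t}\Big)^{-1/\beta},
\]
so that $e^{-\xi_t}\big(1-e^{-zh_{0,t}(\infty)}\big)\le e^{-\xi_t}\big(1-\exp\{-z(\beta C t)^{-1/\beta}e^{-\overline{\xi}_t}\}\big)$. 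Splitting the supremum over $[0,t]$ into its value on $[0,t-\delta]$ and the last increment, and using the further elementary bound $1-e^{-u}\le u^{\gamma}$ for any $\gamma\in(0,1)$ together with $\overline{\xi}_{t-\delta}\ge y$, one can absorb a power $e^{-\gamma\overline{\xi}_{t-\delta}}$ against $e^{-\xi_t}$ after applying the duality relation \eqref{eq_lemmaduality}: under $\mathbb{P}^{(e,1)}$, $e^{-\xi_t}$ combined with a functional of $\overline{\xi}$ can be re-expressed via the time-reversed process and its infimum, and then estimated by the renewal bound \eqref{grandO}. The exponential moment hypothesis \eqref{eq_moments} with $\vartheta^+>1$ is what makes $\mathbb{E}^{(e,1)}[e^{(1+\varepsilon-1)\xi_1}]<\infty$, i.e. it furnishes a little room to control $e^{-\xi_t}$ times a small positive power of $e^{\overline{\xi}}$, since under $\mathbb{P}^{(e,1)}$ the process oscillates and $\mathbb{E}^{(e,1)}[e^{-\varepsilon\xi_1}]<\infty$ for small $\varepsilon>0$.

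After these reductions the quantity $t^{1/2}e^{-\Phi_\xi(1)t}\mathbb{P}_{(z,x)}(Z_t>0,\ \overline{\xi}_{t-\delta}\ge y)$ is bounded by $t^{1/2}$ times the $t^{-1/2}$-type asymptotics coming from \eqref{eq_int_cota_max}-style fluctuation estimates for $\{\overline{\xi}_{t-\delta}\ge y\}$ decomposed over ladder epochs, multiplied by a factor that is summable in the height of the ladder and tends to $0$ as $y\to\infty$ by dominated convergence (here \eqref{eq_xlogx} is not needed, only \eqref{eq_psiupperbound} and \eqref{eq_moments}). Concretely, one writes the event $\{\overline{\xi}_{t-\delta}\ge y\}$ by decomposing at the first passage of $\xi$ above level $y$, applies the Markov property at that (ladder) time, bounds the remaining survival probability by the global estimate from the proof of Theorem \ref{theo_stronglycase} restricted to $\overline{\xi}\ge y$, and shows the contribution is $O(t^{-1/2})$ uniformly with a constant $c(y)$ satisfying $c(y)\to0$. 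Taking $\limsup_{t\to\infty}$ then $\lim_{y\to\infty}$ gives the claim. The main obstacle is the middle step: obtaining a bound on $e^{-\xi_t}\,\mathbb{P}_{(z,x)}(Z_t>0\mid\xi)$ on $\{\overline{\xi}_{t-\delta}\ge y\}$ that is simultaneously (i) summable over the height $y$ of the running supremum and (ii) decays like $t^{-1/2}$ after integration — this is precisely where \eqref{eq_psiupperbound} must be used to trade the lack of an explicit formula for $h_{0,t}(\infty)$ for the stable lower bound, and where the extra exponential moment $\vartheta^+>1$ is indispensable to offset the factor $e^{-\xi_t}$ against a power of $e^{\overline{\xi}_{t-\delta}}$.
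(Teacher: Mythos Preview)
Your opening move is exactly right: condition on the environment, apply the Esscher transform at $\theta=1$, and recognise that the whole game is to control $e^{-\xi_t}\big(1-e^{-zh_{0,t}(\infty)}\big)$ on the event $\{\overline{\xi}_{t-\delta}\ge y\}$ using \eqref{eq_psiupperbound}. But the comparison step is garbled, and the subsequent plan does not recover.

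First, the inequality you write is backwards in two ways. Assumption \eqref{eq_psiupperbound} says $\psi_0(\lambda)\ge C\lambda^{1+\beta}$, so extinction is \emph{faster} than for the pure stable mechanism; the comparison therefore yields an \emph{upper} bound, and the exponential functional carries a \emph{minus} sign:
\[
h_{0,t}(\infty)=v_t(0,\infty,\xi)\ \le\ \Big(\beta C\int_0^t e^{-\beta\xi_u}\,\mathrm{d}u\Big)^{-1/\beta},
\]
not $h_{0,t}(\infty)\ge(\beta C\int_0^t e^{\beta\xi_u}\,\mathrm{d}u)^{-1/\beta}$. Second, once the sign is corrected, your crude replacement of the integral by $t\,e^{\pm\beta\overline{\xi}_t}$ goes the wrong way for what you need: the only trivially true lower bound is $\int_0^t e^{-\beta\xi_u}\,\mathrm{d}u\ge t\,e^{-\beta\overline{\xi}_t}$, which gives $h_{0,t}(\infty)\le (\beta C t)^{-1/\beta}e^{\overline{\xi}_t}$, a bound that \emph{grows} on $\{\overline{\xi}_{t-\delta}\ge y\}$ and so gives no decay in $y$. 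The subsequent programme (fractional power $1-e^{-u}\le u^\gamma$, ladder decomposition, ``a little room'' from $\vartheta^+>1$) would then have to absorb the factor $e^{\overline{\xi}_t-\xi_t}$ under $\mathbb{P}^{(e,1)}$, for which the reflected process has no finite first moment in general; your sketch does not explain how this is done, and I do not see a clean way to make it work.

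The paper's argument avoids all of this by \emph{not} discarding the exponential functional. One simply uses $1-e^{-u}\le u$ to get
\[
e^{-\xi_t}\big(1-e^{-zh_{0,t}(\infty)}\big)\ \le\ z(\beta C)^{-1/\beta}\,e^{-\xi_t}\Big(\int_0^t e^{-\beta(\xi_u-\xi_0)}\mathrm{d}u\Big)^{-1/\beta},
\]
and then applies the duality \eqref{eq_lemmaduality} \emph{jointly} to the functional and the indicator: under $\mathbb{P}^{(e,1)}$, $e^{-\xi_t}\,\mathtt{I}_{0,t}(\beta\xi)^{-1/\beta}\stackrel{(d)}{=}\mathtt{I}_{0,t}(-\beta\xi)^{-1/\beta}$ and the event $\{\overline{\xi}_{t-\delta}\ge y\}$ becomes $\{\underline{\xi}_{t-\delta}\le -y-x\}$. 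The resulting quantity
\[
t^{1/2}\,\mathbb{E}^{(e,1)}\Big[\mathtt{I}_{0,t}(-\beta\xi)^{-1/\beta}\mathbf{1}_{\{\underline{\xi}_{t-\delta}\le -y-x\}}\Big]
\]
is exactly what Lemma~3.5 of Li and Xu \cite{li2018asymptotic} was designed to handle; it vanishes as $t\to\infty$ then $y\to\infty$, and this is where the hypothesis $\vartheta^+>1$ is actually consumed. So the missing idea in your plan is to keep the full exponential functional through duality rather than collapsing it to the running supremum, and to invoke the ready-made estimate from \cite{li2018asymptotic} instead of redoing a ladder-height decomposition by hand.
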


\begin{proof}
	Let $z >0$, $x<0$ and $0<\delta <1$. We begin by noting that conditioning on $\xi$ and  then using the Esscher transform allow us to deduce that
	\begin{eqnarray*}
		e^{-t\Phi_\xi(1)}\mathbb{P}_{(z,x)}\Big(Z_t>0, \ \overline{\xi}_{t-\delta}\geq y\Big) &=&  \mathbb{E}_x^{(e,1)}\Big[e^{-\xi_t}\mathbb{P}_{(z,x)}\big(Z_t>0 \  \big|\big. \ \xi \big)\mathbf{1}_{\{ \overline{\xi}_{t-\delta}\geq y\}}\Big].
	\end{eqnarray*}
	Further note from \eqref{eq_Laplace}, that the survival probability conditioned on the environment is bounded from above by the functional $v_t(0,\infty, \xi-\xi_0)$, i.e., 
	\begin{eqnarray*}\label{eq_strongly_cota}
		\mathbb{P}_{(z,x)}\big(Z_t>0\  \big|\big. \  \xi\big)&=&1-\exp\big\{-zv_t(0,\infty, \xi-\xi_0)\big\} \nonumber \leq z v_t(0,\infty, \xi-\xi_0).
	\end{eqnarray*}
	
	On the other hand,  condition \eqref{eq_psiupperbound} allows us to find a lower bound for $v_t(0,\infty,\xi-\xi_0)$ in terms of the exponential functional of $\xi$. Indeed, we observe from the backward differential equation given in \eqref{eq_BDE} that 
	\begin{equation*}
		\frac{\partial }{\partial s} v_t(s,\lambda e^{-\xi_0},\xi-\xi_0) \geq C v_t^{1+\beta}(s,\lambda  e^{-\xi_0},\xi)e^{-\beta (\xi_s-\xi_0)}, \quad v_t(t,\lambda e^{-\xi_0}, \xi-\xi_0)=\lambda e^{-\xi_0}.
	\end{equation*}
	Integrating between 0 and $t$, we get
	\begin{equation*}
		\frac{1}{v_t^\beta(0,\lambda e^{-\xi_0},\xi-\xi_0)} -\frac{1}{(\lambda e^{-\xi_0})^\beta} \geq C\beta \int_{0}^{t} e^{-\beta (\xi_s-\xi_0)}\mathrm{d} s\quad\text{with} \quad C\beta >0.
	\end{equation*}
	Now, letting $\lambda\uparrow \infty$ and taking into account that  $\beta\in(0,1)$ and $C>0$, we deduce the following inequality  for all $t\geq 0$,
	\begin{equation}\label{eq_cota_v_I}
		v_t(0,\infty,\xi-\xi_0)\leq \big(C\beta \texttt{I}_{0,t}(\beta (\xi-\xi_0))  \big)^{-1/\beta},
	\end{equation}
	where $\texttt{I}_{0,t}(\beta (\xi-\xi_0))$ is the exponential functional of  the L\'evy process $\beta (\xi-\xi_0)$, see  \eqref{eq_expfuncLevy}. The latter implies that
	\[\begin{split}
		e^{-t\Phi_\xi(1)}\mathbb{P}_{(z,x)}\Big(Z_t>0, \ \overline{\xi}_{t-\delta}\geq y\Big) &\leq z(\beta C)^{-1/\beta} \mathbb{E}_x^{(e,1)}\Big[e^{-\xi_t}  \texttt{I}_{0,t}(\beta (\xi-\xi_0))^{-1/\beta}\mathbf{1}_{\{ \overline{\xi}_{t-\delta}\geq y\}}\Big] \\ &= z(\beta C)^{-1/\beta} \mathbb{E}^{(e,1)}\Big[ \texttt{I}_{0,t}(-\beta \xi)^{-1/\beta}\mathbf{1}_{\{ \underline{\xi}_{t-\delta}\leq -y-x\}}\Big],
	\end{split}\]
	where in the last equality we have appealed to the duality Lemma given in \eqref{eq_lemmaduality} to see that
	\begin{equation*}
		e^{-\xi_t}\texttt{I}_{0,t}(\beta \xi)^{-1/\beta} \stackrel{(d)}{=} \left(\int_{0}^{t} e^{\beta \xi_s} \mathrm{d} s\right)^{-1/\beta} =\texttt{I}_{0,t}(-\beta \xi) ^{-1/\beta}.
	\end{equation*}
	Finally, according to Li and Xu \cite[Lemma 3.5]{li2018asymptotic}, we have 
	$$ \lim_{y\to\infty} \limsup_{t\to \infty}	t^{1/2}  \mathbb{E}^{(e,1)}\Big[\texttt{I}_{0,t}(-\beta \xi)^{-1/\beta} \mathbf{1}_{\{ \underline{\xi}_{t-\delta}\leq  -y-x\}}\Big] =0.$$
	Therefore,
	\[\begin{split}
		\lim\limits_{y\to\infty}\limsup_{t\to \infty} &	t^{1/2}  e^{-\Phi_\xi(1)t} \mathbb{P}_{(z,x)}\Big(Z_t>0, \ \overline{\xi}_{t-\delta}\geq y\Big) \\ &\leq z(\beta C)^{-1/\beta}\lim_{y\to\infty} \limsup_{t\to \infty}t^{1/2}  \mathbb{E}^{(e,1)}\Big[\texttt{I}_{0,t}(-\beta \xi)^{-1/\beta} \mathbf{1}_{\{ \underline{\xi}_{t-\delta}\leq  -y-x\}}\Big] =0,
	\end{split}\]
	which concludes the proof.
\end{proof}

The following Lemma studies the survival probability under environments with low extrema. More precisely, it confirms the statement that only paths of the L\'evy process with a very low running supremum give a substantial contribution to the speed of the survival probability.

\begin{lemma}\label{lem_inter_cota1} Suppose that condition \eqref{eq_xlogx} holds together with the exponential moment condition \eqref{eq_moments}  with  $\vartheta>1$. We also assume that the integral condition in \eqref{eq:assumpinter} holds Then for every $z>0$ and $x<0$, we have 
	\[\begin{split}
		\lim\limits_{t\to\infty} 	t^{1/2} e^{-\Phi_{\xi}(1)t}\mathbb{P}_{(z,x)}\Big(Z_t>0, \ \overline{\xi}_t< 0\Big)    = z \sqrt{\frac{2}{\pi \Phi_\xi''(1)}} \mathbb{E}^{(e,1)}\big[H_1\big]\mathfrak{b}_2(x),
	\end{split}\]
	where 
	\begin{equation}\label{eq_B3}
		\mathfrak{b}_2(x)= U^{(1)}(-x) \mathbf{E}^{(e,1), \uparrow}_{-x}\left[\int_{0}^{\infty}\exp\left\{-\int_{-\infty}^{0}\psi_0'\big(h_{s,0}^{\Xi}(\lambda)\big)\mathrm{d} s\right\}\mathrm{d} \lambda\right] \in (0,\infty).
	\end{equation}
\end{lemma}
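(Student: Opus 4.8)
The plan is to mimic the strongly subcritical argument of Theorem \ref{theo_stronglycase}, but with the extra constraint $\{\overline\xi_t<0\}$, which forces us to work under the measure $\mathbb{P}^{(e,1)}_x$ and to use the conditioning-to-stay-positive machinery instead of the naive Esscher transform alone. First I would condition on the environment and apply the Esscher transform at $\theta=1$ to obtain
\[
e^{-\Phi_\xi(1)t}\mathbb{P}_{(z,x)}\Big(Z_t>0,\ \overline\xi_t<0\Big)
=\mathbb{E}^{(e,1)}_x\Big[e^{-\xi_t}\,\mathbb{P}_{(z,x)}\big(Z_t>0\ \big|\ \xi\big)\,\mathbf{1}_{\{\overline\xi_t<0\}}\Big].
\]
Using the dual process via \eqref{eq_lemmaduality} and the fact that $0$ is not the running supremum at time $0$ in reverse time, the event $\{\overline\xi_t<0\}$ for a path started at $x<0$ becomes (after the space-time reversal) an event about a dual path started at $-x>0$ that stays positive up to time $t$; the factor $e^{-\xi_t}$ converts into evaluation of the dual path at its endpoint. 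Concretely, writing $G_t(\lambda)=e^{-\xi_t}(1-\exp\{-zh_{0,t}(\lambda)\})$ as in the proof of Theorem \ref{theo_stronglycase}, I would express $G_t(\infty)=\int_0^\infty G_t'(\lambda)\,\mathrm{d}\lambda$ with $\mathbb{E}^{(e,1)}_x[G_t'(\lambda)\mathbf{1}_{\{\overline\xi_t<0\}}]$ rewritten, through the reversal, as a CBLE-with-immigration Laplace functional run along a path of $\xi$ conditioned (via the $\widehat U^{(1)}$-martingale) to stay positive, with the prefactor $\mathbb{P}^{(e,1)}_x(\overline\xi_t<0)$ pulled out. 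At this stage the asymptotic \eqref{eq_int_cota_max} supplies the factor $t^{-1/2}\sqrt{2/(\pi\Phi_\xi''(1))}\,\mathbb{E}^{(e,1)}[H_1]\,U^{(1)}(-x)$, so that after multiplying by $t^{1/2}$ the surviving quantity is
\[
U^{(1)}(-x)\sqrt{\tfrac{2}{\pi\Phi_\xi''(1)}}\,\mathbb{E}^{(e,1)}[H_1]\cdot
\int_0^\infty \mathbf{E}^{(e,1),\uparrow}_{-x}\!\Big[\text{(CBLEwI integrand at time }t)\Big]\mathrm{d}\lambda
\]
up to a $z$ prefactor and an asymptotically negligible error.

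Next I would take $t\to\infty$ inside the integral. Under $\mathbb{P}^{(e,1),\uparrow}_{-x}$ the process drifts to $+\infty$, so the time-reversed environment $\Xi$ drifts to $+\infty$ at $-\infty$; exactly as in Theorem \ref{theo_stronglycase} the monotonicity of $-t\mapsto v_0(-t,\lambda,\Xi)$ gives $h_{-t,0}^\Xi(\lambda)\le e^{-\Xi_{-t}}\lambda\to 0$, hence the CBLEwI integrand converges pointwise to $\exp\{-\int_{-\infty}^0\psi_0'(h_{s,0}^\Xi(\lambda))\,\mathrm{d}s\}$. To justify interchanging limit and $\lambda$-integral I would dominate by $\mathbf{E}^{(e,1),\uparrow}_{-x}[\exp\{-\int_{-1}^0\psi_0'(h_{s,0}^\Xi(\lambda))\,\mathrm{d}s\}]$, which is integrable by hypothesis \eqref{eq:assumpinter}; this is the analogue of the dominating function $g(\lambda)$ from the strongly subcritical proof. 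The positivity of $\mathfrak{b}_2(x)$ follows as before from the $x\log x$-moment condition \eqref{eq_xlogx} (via Corollary 5.7 in \cite{he2018continuous} / the proof of Theorem 5.6 there), which guarantees $\mathbf{E}^{(e,1),\uparrow}_{-x}[\exp\{-\int_{-\infty}^0\psi_0'(h_{s,0}^\Xi(\lambda))\,\mathrm{d}s\}]>0$, and finiteness follows from \eqref{eq:assumpinter}; since $U^{(1)}(-x)\in(0,\infty)$ for $x<0$, we get $\mathfrak{b}_2(x)\in(0,\infty)$.

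The main obstacle I anticipate is the rigorous implementation of the space-time reversal in the presence of the supremum constraint: one must show that, conditionally on $\{\overline\xi_t<0\}$ and suitably renormalised by $\mathbb{P}^{(e,1)}_x(\overline\xi_t<0)$, the law of the reversed path $(x-\xi_{(t-s)^-},\,0\le s\le t)$ converges, as $t\to\infty$, to that of $\xi$ started at $-x$ and conditioned to stay positive in the $h$-transform sense \eqref{defPuparrowx}, and that this convergence is strong enough (together with the domination above) to pass to the limit inside the expectation of the CBLEwI functional. This is a standard-in-spirit but delicate step combining the renewal-theoretic asymptotics behind \eqref{eq_int_cota_max} with the Markov property of the reflected process; I would handle it by first establishing the convergence of finite-dimensional distributions of the reversed environment over a fixed window $[0,T]$ (so that the integral $\int_{-T}^0\psi_0'(h_{s,0}^\Xi(\lambda))\,\mathrm{d}s$ is controlled), then letting $T\to\infty$ using the uniform domination, and finally absorbing the contribution of the tail window into an error term that vanishes after multiplication by $t^{1/2}$ thanks to \eqref{eq_int_cota_max} and the boundedness $|G_t'(\lambda)|\le z$.
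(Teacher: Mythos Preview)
Your proposal is correct and takes essentially the same approach as the paper: Esscher transform, the $G_t'$ integral representation, passage to the time-extended process $\Xi$ so that $\{\overline\xi_t<-x\}$ becomes $\{\underline\Xi_{-t}>x\}$, factoring out $\mathbf{P}^{(e,1)}(\underline\Xi_{-t}>x)$ and invoking \eqref{eq_int_cota_max}, then passing to the conditioned-to-stay-positive limit with domination by \eqref{eq:assumpinter} and positivity from \eqref{eq_xlogx}. The paper implements the obstacle you flag via a three-step argument that matches your outline (fixed window, then let it grow) but is organised through an auxiliary parameter $\gamma\in(1,2]$: it first proves $\mathbf{E}^{(e,1)}_{-x}[F_t(\lambda)\mid\underline\Xi_{-(t+r)}>0]\to\mathbf{E}^{(e,1),\uparrow}_{-x}[F_t(\lambda)]$ as $r\to\infty$ for fixed $t$, then bounds $|F_t(\lambda)-F_s(\lambda)|$ under $\{\underline\Xi_{-\gamma t}>0\}$ to exchange the $s,t$ limits (using generalised Dominated Convergence with \eqref{eq:assumpinter}), and finally shows that conditioning on $\{\underline\Xi_{-\gamma t}>0\}$ versus $\{\underline\Xi_{-t}>0\}$ differs by $o(1)$ as $\gamma\downarrow 1$.
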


\begin{proof}
	Let $z>0$ and assume that $\xi_0=x<0$. We begin by recalling that,  under $\mathbb{P}^{(e,1)}$, the L\'evy process $\xi$ oscillates. In addition from the Esscher transform,
	we have the following identity 
	\begin{equation*}
		\begin{split}
			e^{-\Phi_{\xi}(1)t}\mathbb{P}_{(z,x)}\Big(Z_t>0, \ \overline{\xi}_t < 0\Big) &= 	e^{-\Phi_{\xi}(1)t}\mathbb{P}_{(z,0)}\Big(Z_t>0, \ \overline{\xi}_t < -x\Big) \\ &  =\mathbb{E}^{(e,1)}\Big[e^{-\xi_t}\mathbb{P}_{(z,0)}\big(Z_t>0\ \big|\big. \ \xi\big)\mathbf{1}_{\{\overline{\xi}_t < -x\}}\Big] 
		\end{split}
	\end{equation*}
	Recall from  \eqref{eq_transLaplace}, that for any $\lambda\geq 0$ and $s\leq t$, the random cumulant  $h_{s,t}(\lambda)$ satisfies 
	\begin{equation*}
		\begin{split}
			\mathbb{E}_{(z,x)}\big[e^{-\lambda Z_t}\  \big|\big. \  \xi, \mathcal{F}_s^{(b)}\big]&= 	\mathbb{E}_{(z,0)}\Big[e^{-\lambda Z_t e^{\xi_t} e^{-\xi_t}}\  \big|\big. \  \xi, \mathcal{F}_s^{(b)}\Big] \\ &=\exp\{-Z_s h_{s,t}(\lambda)\}.
		\end{split}
	\end{equation*}
	From the previous identity, we observe that the initial condition of the L\'evy process $\xi$ is irrelevant for the functional $h_{s,t}(\lambda)$. Further, recall that the quenched survival probability of the process $(Z_t,t\geq0)$ is given by $\mathbb{P}_{(z,0)}\big(Z_t>0 \  \big|\big. \  \xi\big)=1-\exp\{-zh_{0,t}(\infty)\}$. Thus,
	\begin{equation*}
		\begin{split}
			e^{-\Phi_{\xi}(1)t}\mathbb{P}_{(z,x)}\Big(Z_t>0, \ \overline{\xi}_t < 0\Big)   &= \mathbb{E}^{(e,1)}\Big[e^{-\xi_t}\mathbb{P}_{(z,0)}\big(Z_t>0\ \big|\big. \ \xi\big)\mathbf{1}_{\{\overline{\xi}_t < -x\}}\Big]\\ &= \mathbb{E}^{(e,1)}\Big[e^{-\xi_t} \big(1-\exp\big\{-zh_{0,t}(\infty)\big\}\big)\mathbf{1}_{\{\overline{\xi}_t < -x\}}\Big]. 
		\end{split}
	\end{equation*}
	Now, we use the same notation as in the proof of Theorem \ref{theo_stronglycase}. Namely, we denote for each fixed $t\geq 0$, the function
	\begin{equation*}
		G_t(\lambda) = e^{-\xi_t}\Big(1-\exp\big\{-zh_{0,t}(\lambda)\big\}\Big), \quad \quad \text{for}\quad \lambda\geq 0.
	\end{equation*}
	Then,
	\[
	G_t(0)= 0 \quad \quad \quad  \text{and}\quad \quad  \quad G_t(\infty) = e^{-\xi_t}\mathbb{P}_{(z,0)}\big(Z_t>0 \  |\  \xi\big).
	\]
	Since the map $\lambda \mapsto h_{0,t}(\lambda)$ is differentiable, then so does $G_t(\cdot)$.
	In view of the above arguments, we deduce  
	\begin{eqnarray*}
		e^{-\Phi_{\xi}(1)t}\mathbb{P}_{(z,x)}\Big(Z_t>0, \ \overline{\xi}_t < 0\Big) &=& \mathbb{E}^{(e,1)}\left[\mathbf{1}_{\{\overline{\xi}_t< -x\}}\int_{0}^{\infty} G_t'(\lambda)\mathrm{d}\lambda \right]\\ & =& \int_{0}^{\infty} \mathbb{E}^{(e,1)}\Big[G_t'(\lambda) \mathbf{1}_{\{\overline{\xi}_t< -x\}}\Big]\mathrm{d} \lambda,
	\end{eqnarray*}
	where in the last equality, the expectation and the integral may be exchanged using Fubini's Theorem. Recall the definition of the homogeneous L\'evy process $\Xi$ given in \eqref{eq_levytiemponeg}. Now, using the same strategy as in the proof of Theorem \ref{theo_stronglycase}, that is extending the map $s\mapsto h^{\Xi}_{s,0}(\lambda)$, for $s\leq 0$, and taking the derivate of $G_t(\cdot)$ computed in \eqref{eq:derivada}, we have
	\[\begin{split}
		\mathbb{E}^{(e,1)}\Big[G_t'(\lambda)\mathbf{1}_{\{\overline{\xi}_t< -x\}}\Big]&= z\mathbb{E}^{(e,1)}\left[\exp\left\{-zh_{0,t}(\lambda)-\int_{0}^{t} \psi'_0\big(h_{s,t}(\lambda)\big)\mathrm{d} s\right\}\mathbf{1}_{\{\overline{\xi}_t< -x\}}\right]\\ &= z\mathbf{E}^{(e,1)}\left[\exp\left\{-zh_{-t,0}^{\Xi}(\lambda)-\int_{-t}^{0} \psi'_0\big(h_{s,0}^{\Xi}(\lambda)\big)\mathrm{d} s\right\}\mathbf{1}_{\{\underline{\Xi}_{-t}> x\}}\right],
	\end{split}\]
where we recall that $(\Xi, \mathbf{P}^{(e,1)})$ is the homogeneous L\'evy process indexed in $\mathbb{R}$ associated to  $(\xi, \mathbb{P}^{(e,1)})$. Next, we simplify the notation by  introducing, for  $t\geq 0$,
	\begin{equation*}
		F_t(\lambda):=\exp\left\{-zh_{-t,0}^{\Xi}(\lambda)-\int_{-t}^{0} \psi'_0\big(h_{s,0}^{\Xi}(\lambda)\big)\mathrm{d} s\right\} .
	\end{equation*}
	Hence, making use of the above observations, we deduce 
	\begin{equation*}
		e^{-\Phi_{\xi}(1)t}\mathbb{P}_{(z,x)}\Big(Z_t>0, \ \overline{\xi}_t< 0\Big)  = z\mathbf{P}^{(e,1)}\big(\underline{\Xi}_{-t}> x\big)\int_{0}^{\infty} \mathbf{E}^{(e,1)}\Big[F_t(\lambda)\  \big|\big. \  \underline{\Xi}_{-t}> x\Big]\mathrm{d} \lambda.
	\end{equation*}
	Now, taking into account that
	\begin{equation}\label{eq_compK}
		\lim\limits_{t\to \infty} 	t^{1/2} \mathbf{P}^{(e,1)}\big(\underline{\Xi}_{-t}> x\big) =	\lim\limits_{t\to \infty} 	t^{1/2}\mathbb{P}^{(e,1)}\left(\overline{\xi}_{t}< -x\right) = \sqrt{\frac{2}{\pi \Phi_\xi''(1)}} \mathbb{E}^{(e,1)}\big[H_1\big]U^{(1)}(-x),
	\end{equation} 
	thus the proof of this lemma will be completed  once we have shown
	\begin{equation*}
		\begin{split}
			\lim\limits_{t\to \infty} \int_{0}^{\infty} \mathbf{E}^{(e,1)}\Big[F_t(\lambda)\  \big|\big. \ \underline{\Xi}_{-t} >x \Big]\mathrm{d} \lambda &=\lim\limits_{t\to \infty} \int_{0}^{\infty} \mathbf{E}^{(e,1)}_{-x}\Big[F_t(\lambda)\  \big|\big. \ \underline{\Xi}_{-t} >0 \Big]\mathrm{d} \lambda\\  &=  \mathbf{E}^{(e,1), \uparrow}_{-x}\left[\int_{0}^{\infty}\exp\left\{-\int_{-\infty}^{0}\psi_0'\big(h_{s,0}^{\Xi}(\lambda)\big)\mathrm{d} s\right\}\mathrm{d} \lambda\right]\\ &=:b(x).
		\end{split}
	\end{equation*}
	
	The arguments used to deduce the preceding limit are quite involved, for that reason we split its proof in three steps.
	\\
	
	\textit{Step 1.} Let us first introduce the following functions, for $r, \lambda\geq 0$ and $t\geq 0$,
	$$ f_r(t,\lambda):= \mathbf{E}^{(e,1)}_{-x}\Big[F_t(\lambda)\  \big|\big. \  \underline{\Xi}_{-(t+r)}> 0 \Big] ,$$
	and
	$$ g_r(t,\lambda):= \mathbf{E}^{(e,1)}_{-x}\left[\exp\left\{-\int_{-t}^{0}\psi_0'(h_{s,0}^{\Xi}(\lambda)) \mathrm{d} s \right\}  \Big|\Big. \  \underline{\Xi}_{-(t+r)}> 0 \right].$$

	Since $F_t(\lambda)$ and $\exp\{-\int_{-t}^{0}\psi_0'(h_{s,0}^{\Xi}(\lambda))\mathrm{d} \lambda\}$ are bounded random variables, we may deduce 
	\begin{equation}\label{eq:convfr}
		f_r(t,\lambda) \to\mathbf{E}^{(e,1), \uparrow}_{-x}\left[F_t(\lambda)\right]  \quad \text{and} \quad g_r(t,\lambda) \to \mathbf{E}^{(e,1), \uparrow}_{-x}\left[\exp\left\{-\int_{-t}^{0}\psi_0'(h_{s,0}^{\Xi}(\lambda)) \mathrm{d} s \right\} \right],
	\end{equation}
	as $r\to \infty$. 
	We first prove the convergence for $F_t(\lambda)$, since the same arguments will lead to the other convergence. Appealing to the Markov property,  we have, 
		\begin{equation}\label{eq_Rs}
		\mathbf{E}_{-x}^{(e,1)}\Big[F_t(\lambda) \  |\   \underline{\Xi}_{t+r} > 0\Big]  =  \mathbf{E}_{-x}^{(e,1)} \left[F_t(\lambda) \frac{\mathbf{P}^{(e,1)}_{-\Xi_{-t}}\big(\underline{\Xi}_{-r}> 0\big)}{\mathbf{P}^{(e,1)}_{-x}\big(\underline{\Xi}_{-(t+r)}>0\big)} \mathbf{1}_{\{\underline{\Xi}_{-t}>0\}} \right].
	\end{equation}
	Now since \eqref{eq_compK} holds, we have for $\epsilon >0$ that there exists a constant  $N_1>0$ (which depends on $\epsilon$) such that the following inequality is satisfied for all $r \geq N_1$,
	\begin{equation*}
		\frac{\mathbf{P}^{(e,1)}_{-\Xi_{-r}}\big(\underline{\Xi}_{-
				r}>0\big)}{\mathbf{P}^{(e,1)}_{-x}\big(\underline{\Xi}_{-(t+r)}>0 \big)} \leq \frac{(1+\epsilon)}{(1-\epsilon)} \left(\frac{r}{t+r}\right)^{-1/2} \frac{U^{(1)}(-\Xi_{-t})} {U^{(1)}(-x)}.
	\end{equation*}
	Therefore, we deduce that for $r \geq N_1$,
	\begin{equation}\label{eq_lowerboundP}
		\frac{\mathbf{P}^{(e,1)}_{-\Xi_t}\big(\underline{\Xi}_{-
				r}>0\big)}{\mathbf{P}^{(e,1)}_{-x}\big(\underline{\Xi}_{-(t+r)}>0 \big)} \leq \frac{(1+\epsilon)}{(1-\epsilon)} \left(1+ \frac{t}{N}\right)^{1/2} \frac{U^{(1)}(-\Xi_{-t})}{U^{(1)}(-x)}.
	\end{equation} 
	Since the renewal function $U^{(1)}$ satisfies $$\mathbf{E}_{-x}^{(e,1)}\big[U^{(1)}(-\Xi_{-t})\mathbf{1}_{\{\underline{\Xi}_{-t}>0\}}\big] = \mathbb{E}^{(e,1)}_x\big[U^{(1)}(-\xi_t)\mathbf{1}_{\{\overline{\xi}_t <0\}}\big]=U^{(1)}(-x)$$ and $U^{(1)}(-x)$ is finite and $F_t(\lambda)$ is a bounded random variable, then we can apply the Dominated Convergence Theorem in \eqref{eq_Rs} to obtain the first convergence in \eqref{eq:convfr}. 
	
	Similarly, inequality \eqref{eq_lowerboundP} implies that the following upper bound also holds
\[\begin{split}
	g_r(t,\lambda) \leq C_1(t) \mathbf{E}^{(e,1), \uparrow}_{-x}\left[\exp\left\{-\int_{-t}^{0}\psi_0'(h_{s,0}^{\Xi}(\lambda)) \mathrm{d} s \right\} \right],
\end{split}\]	
where $C_1(t)$ is a positive constant which depends on $t$. We may now appeal to the Dominated Convergence Theorem together with our hypothesis \eqref{eq:assumpinter}, to deduce that for $t\geq 1$
\begin{equation*}
	\int_{0}^{\infty} g_r(t,\lambda) \mathrm{d}\lambda \to \int_{0}^{\infty}  \mathbf{E}^{(e,1), \uparrow}_{-x}\left[\exp\left\{-\int_{-t}^{0}\psi_0'(h_{s,0}^{\Xi}(\lambda)) \mathrm{d} s \right\} \right] \mathrm{d} \lambda, \quad \text{as}\quad  r \to \infty.
\end{equation*}
Furthermore, since $f_r(t,\lambda)\leq g_r(t,\lambda)$, an application of  the \textit{generalised} Dominated Convergence Theorem (see for instance Folland \cite[ Exercise 2.20]{folland}) give us  
\begin{equation}\label{eq_tyh}
	\lim\limits_{r\to \infty}	\int_{0}^{\infty} \mathbf{E}^{(e,1)}_{-x}\Big[F_t(\lambda)\  \big|\big. \  \underline{\Xi}_{-(t+r)}> 0 \Big]\mathrm{d} \lambda = \int_{0}^{\infty}\mathbf{E}^{(e,1), \uparrow}_{-x}\left[F_t(\lambda)\right] \mathrm{d} \lambda.
\end{equation}
\\

\textit{Step 2}. Let $1\leq s\leq t$, $\lambda \geq 0$ and $\gamma \in (1,2]$. From the proof of Lemma 4 in \cite{bansaye2021extinction}, we can deduce
\begin{equation*}
	\left| \mathbf{E}_{-x}^{(e,1)} \Big[F_t(\lambda) - F_s(\lambda)\ \big|\big. \  \underline{\Xi}_{-\gamma t}> 0\Big] \right|  \leq  	C_2 \mathbf{E}_{-x}^{(e,1),\uparrow}\Big[| F_t(\lambda) - F_s(\lambda)| \Big] ,
\end{equation*}
where $C_2$ is a positive constant. Hence
\begin{equation*}
	\left| \int_{0}^{\infty} \mathbf{E}_{-x}^{(e,1)} \Big[F_t(\lambda) - F_s(\lambda)\ \big|\big. \  \underline{\Xi}_{-\gamma t}> 0\Big] \mathrm{d}\lambda\right|  \leq  	C_2 \int_{0}^{\infty}\mathbf{E}_{-x}^{(e,1),\uparrow}\Big[| F_t(\lambda) - F_s(\lambda)| \Big] \mathrm{d} \lambda.
\end{equation*}
Now, under the event that $\{\underline{\Xi}_{-\gamma t}> 0\}$, we know  that, for each $\lambda\ge 0$, the following inequalities $h_{-s,0}^{\Xi}(\lambda)\leq \lambda e^{-\Xi_{-s}}\leq \lambda$ hold. In particular, under  $\{\underline{\Xi}_{-\gamma t}> 0\}$, we obtain 
\[\begin{split}
	| F_t(\lambda) - F_s(\lambda)|  &= \exp\left\{-\int_{-s}^{0} \psi_0'(h_{u,0}^{\Xi}(\lambda))\mathrm{d}u\right\}\\ & \hspace{1.7cm} \left|\exp\left\{-zh_{-t,0}^{\Xi}(\lambda)-\int_{-t}^{-s} \psi'_0\big(h_{u,0}^{\Xi}(\lambda)\big)\mathrm{d} u\right\}-\exp\{-zh_{-s,0}^{\Xi}(\lambda)\}\right| \\&  \leq \exp\left\{-\int_{-s}^{0} \psi_0'(h_{u,0}^{\Xi}(\lambda))\mathrm{d}u\right\} \left|\exp\left\{-\int_{-t}^{-s} \psi'_0\big(h_{u,0}^{\Xi}(\lambda)\big)\mathrm{d} u\right\} - e^{-z\lambda} \right|\\ &\leq 2 \exp\left\{-\int_{-1}^{0} \psi_0'(h_{u,0}^{\Xi}(\lambda))\mathrm{d}u\right\}.
\end{split}\]
It then follows, from the previous calculations and our assumption \eqref{eq:assumpinter} together with the Dominated Convergence Theorem, that
\begin{equation*}
	\lim\limits_{s\to \infty}\lim\limits_{t\to \infty}	\left| \int_{0}^{\infty}\mathbf{E}_{-x}^{(e,1)} \Big[F_t(\lambda) - F_s(\lambda)\ \big|\big. \  \underline{\Xi}_{-\gamma t}> 0\Big] \mathrm{d} \lambda \right| = 0,
\end{equation*}	
which in particular yields
\begin{equation*}
	\lim\limits_{s\to \infty}\lim\limits_{t\to \infty}	\int_{0}^{\infty} \mathbf{E}_{-x}^{(e,1)} \Big[F_t(\lambda) - F_s(\lambda)\ \big|\big. \  \underline{\Xi}_{-\gamma t}> 0\Big] \mathrm{d} \lambda= 0.
\end{equation*}	
Thus, appealing to \eqref{eq_tyh} in Step 1, we get
\begin{eqnarray*}
	\lim\limits_{t\to \infty} \int_{0}^{\infty} \mathbf{E}_{-x}^{(e,1)} \Big[F_t(\lambda)\ \big|\big. \  \underline{\Xi}_{-\gamma t}> 0\Big] \mathrm{d} \lambda &=& \lim\limits_{s\to \infty}	\lim\limits_{t\to \infty} \int_{0}^{\infty}  \mathbf{E}_{-x}^{(e,1)} \Big[ F_s(\lambda)\ \big|\big. \  \underline{\Xi}_{-\gamma t}> 0\Big] \mathrm{d} \lambda\nonumber \\ &=& \lim\limits_{s\to \infty} \int_{0}^{\infty}\mathbf{E}^{(e,1), \uparrow}_{-x}\left[F_s(\lambda)\right] \mathrm{d} \lambda.
\end{eqnarray*}
In order to deal with the above limit in the right-hand side, first note that  $h_{-s,0}^{\Xi}(\lambda)\leq \lambda e^{-\Xi_{-s}} \to 0$, as $s\to \infty$, $\mathbf{P}^{(e,1)}_{-x}$-a.s. Moreover, we have 
$$ \mathbf{E}^{(e,1), \uparrow}_{-x}\left[F_{s}(\lambda)\right] \to \mathbf{E}^{(e,1), \uparrow}_{-x}\left[\exp\left\{-\int_{-\infty}^{0}\psi_0'(h_{u,0}^{\Xi}(\lambda))\mathrm{d}\lambda\right\}\right],\quad \text{as}\quad s \to \infty,$$
and for $s\geq 1$,
$$  \mathbf{E}^{(e,1), \uparrow}_{-x}\left[F_{s}(\lambda)\right] \leq \mathbf{E}^{(e,1), \uparrow}_{-x}\left[\exp\left\{-\int_{-1}^{0}\psi_0'(h_{u,0}^{\Xi}(\lambda))\mathrm{d}\lambda\right\}\right].$$
Hence, we may now apply once again the Dominated Convergence Theorem to deduce that 
\[\int_{0}^{\infty}\lim\limits_{s\to \infty}  \mathbf{E}^{(e,1), \uparrow}_{-x}\left[F_s(\lambda)\right] \mathrm{d} \lambda =b(x) <\infty.\]
In other words, we have
\[\lim\limits_{t\to \infty} \int_{0}^{\infty} \mathbf{E}_{-x}^{(e,1)} \Big[F_t(\lambda)\ \big|\big. \  \underline{\Xi}_{-\gamma t}> 0\Big] \mathrm{d} \lambda = b(x).\]
Next, from \eqref{eq_compK} we obtain 
\[\begin{split}
	\lim\limits_{t\to \infty} \frac{1}{\mathbf{P}_{-x}^{(e,1)}(\underline{\Xi}_{-t}> 0)}& \int_{0}^{\infty} \mathbf{E}_{-x}^{(e,1)} \Big[F_t(\lambda)\mathbf{1}_{\{\underline{\Xi}_{-\gamma t}> 0\}}\Big] \mathrm{d} \lambda\\ &= \lim\limits_{t\to \infty} \frac{\mathbf{P}^{(e,1)}_{-x}\big(\underline{\Xi}_{-\gamma t}> 0\big)}{\mathbf{P}^{(e,1)}_{-x}\big(\underline{\Xi}_{-t} > 0\big)}  \int_{0}^{\infty} \mathbf{E}_{-x}^{(e,1)} \Big[F_t(\lambda)\ \big|\big. \  \underline{\Xi}_{-\gamma t}>0\Big]  \mathrm{d} \lambda\\ &= \gamma^{-1/2} b(x).
\end{split}\]
Since $\gamma$ may be chosen arbitrarily close to 1, we have
\begin{eqnarray*}
	\int_{0}^{\infty} \mathbf{E}^{(e,1)}_{-x}\Big[F_t(\lambda) \mathbf{1}_{\{\underline{\Xi}_{-\gamma t}> 0 \}}\Big]\mathrm{d} \lambda - b(x)\mathbf{P}_{-x}^{(e,1)}\big(\underline{\Xi}_{-t}> 0\big)= o(1)\mathbf{P}_{-x}^{(e,1)}\big(\underline{\Xi}_{-t}> 0\big).
\end{eqnarray*}
\\

\textit{Step 3}. Let $\lambda\geq 0$, $t\geq 1$ and $\gamma \in (1,2]$ and denote 
\[J_t(\lambda):=\frac{1}{\mathbf{P}_{-x}^{(e,1)}(\underline{\Xi}_{-t}> 0)}\mathbf{E}^{(e,1)}_{-x}\Big[F_t(\lambda)\left( \mathbf{1}_{\{\underline{\Xi}_{-t}> 0\}} -  \mathbf{1}_{\{\underline{\Xi}_{ -\gamma t}> 0 \}}\right)\Big].\]
Observe from \eqref{eq_compK} and the fact that $F_t(\lambda)\leq 1$, that the following holds
\[0\leq J_t(\lambda)\leq 1 - \frac{\mathbf{P}^{(e,1)}_{-x}\big(\underline{\Xi}_{-\gamma t}> 0\big)}{\mathbf{P}^{(e,1)}_{-x}\big(\underline{\Xi}_{-t}> 0\big)} \to 1- \gamma^{1/2}, \quad \text{as} \quad t \to \infty. \] 
Since $\gamma$ may be taken arbitrary close to 1,  we deduce that  $J_t(\lambda) \to 0$ as $t\to \infty$.  	In addition, 
\[\begin{split}J_t(\lambda)&\leq \mathbf{E}^{(e,1)}_{-x}\left[\exp\left\{-\int_{-t}^{0}\psi_0'(h_{s,0}^{\Xi}(\lambda)) \mathrm{d} s \right\}\ \Big|\Big. \ \underline{\Xi}_{-t}> 0 \right] \\ & \leq \mathbf{E}^{(e,1)}_{-x}\left[\exp\left\{-\int_{-1}^{0}\psi_0'(h_{s,0}^{\Xi}(\lambda)) \mathrm{d} s \right\}\ \Big|\Big. \ \underline{\Xi}_{-t}> 0 \right]\\ &\leq  C_3 \mathbf{E}^{(e,1), \uparrow}_{-x}\left[\exp\left\{-\int_{-1}^{0}\psi_0'(h_{s,0}^{\Xi}(\lambda)) \mathrm{d} s \right\} \right],\end{split}\]
where $C_3$ is a positive constant and the right-hand side is an integrable function in $\lambda$ thanks to the assumption \eqref{eq:assumpinter}. Hence, appealing again to the Dominate Convergence Theorem, we see  
\begin{equation*}\label{eq_cotaRs}
	\int_{0}^{\infty} \mathbf{E}^{(e,1)}_{-x}\Big[F_t(\lambda)\left( \mathbf{1}_{\{\underline{\Xi}_{ -t}> 0\}} -  \mathbf{1}_{\{\underline{\Xi}_{ -\gamma t}> 0 \}}\right)\Big] \mathrm{d} \lambda = o(1)\mathbf{P}_{-x}^{(e,1)}\big(\underline{\Xi}_{-t}> 0\big).
\end{equation*}
We combine the previous limit  with the conclusion of  Steps 2 to deduce, as promised earlier, that
\[\begin{split}
	\int_{0}^{\infty} \mathbf{E}^{(e,1)}_{-x}\Big[F_t(\lambda)& \mathbf{1}_{\{\underline{\Xi}_{- t}> 0 \}}\Big]\mathrm{d} \lambda - b(x)\mathbf{P}_{-x}^{(e,1)}\big(\underline{\Xi}_{-t}> 0\big)\\  & =	\int_{0}^{\infty} \mathbf{E}^{(e,1)}_{-x}\Big[F_t(\lambda) \mathbf{1}_{\{\underline{\Xi}_{- t}> 0 \}}\Big]\mathrm{d} \lambda- 	\int_{0}^{\infty} \mathbf{E}^{(e,1)}_{-x}\Big[F_t(\lambda) \mathbf{1}_{\{\underline{\Xi}_{-\gamma t}> 0 \}}\Big]\mathrm{d} \lambda \\ & \hspace{2cm}+ 	\int_{0}^{\infty} \mathbf{E}^{(e,1)}_{-x}\Big[F_t(\lambda) \mathbf{1}_{\{\underline{\Xi}_{-\gamma t}> 0 \}}\Big]\mathrm{d} \lambda-  b(x)\mathbf{P}_{-x}^{(e,1)}\big(\underline{\Xi}_{-t}> 0\big)\\ & = o(1)\mathbf{P}_{-x}^{(e,1)}\big(\underline{\Xi}_{-t}> 0\big).
\end{split}\]
Finally, similarly as in the proof of Theorem \ref{theo_stronglycase}, we see that the moment condition \eqref{eq_xlogx} guarantees that $b(x)>0$. This concludes the proof.
\end{proof}

With  Lemmas  \ref{lem_inter_cota0} and \ref{lem_inter_cota1} in hand, we may now proceed to the proof of Theorem \ref{theo_interm} following similar ideas as those used in Theorem 1 in \cite{bansaye2021extinction}.  

\begin{proof}[Proof of Theorem \ref{theo_interm}] 
Let $z,\epsilon >0$ and $x<0$.  From Lemma \ref{lem_inter_cota0}, we have  for every $\delta \in (0,1)$, 
$$\lim\limits_{y \to \infty} \limsup_{t\to \infty} t^{1/2} e^{-t\Phi_{\xi}(\gamma)}\mathbb{P}_{(z,x)}\Big(Z_t>0,\  \overline{\xi}_{t-\delta}\geq y\Big) = 0.$$
Then it follows that, we may choose $y>0$ such that for $t$ sufficiently large 
\begin{equation*}
	\mathbb{P}_{(z,x)}\Big(Z_t>0,\ \overline{\xi}_{t-\delta}\geq y\Big)   \leq \epsilon \mathbb{P}_{(z,x)}\Big(Z_t>0,\  \overline{\xi}_{t-\delta}< y\Big).
\end{equation*} 
Further, since $\{Z_{t} >0\}\subset \{Z_{t-\delta} >0\}$ for $t$ large, we deduce that
\begin{eqnarray*}
	\mathbb{P}_{z}(Z_t>0) &=&  \mathbb{P}_{(z,x)}\Big(Z_t>0,\  \overline{\xi}_{t-\delta}\geq y\Big)  + \mathbb{P}_{(z,x)}\Big(Z_t>0,\  \overline{\xi}_{t-\delta}< y\Big) \\ &\leq & (1+\epsilon) \mathbb{P}_{(z,x-y)}\Big(Z_{t-\delta}>0,\  \overline{\xi}_{t-\delta}< 0\Big).
\end{eqnarray*}
In other words, for every $\epsilon >0$ there exists $y'<0$ such that  
\[\begin{split}
	&(1-\epsilon) t^{1/2}e^{-\Phi_{\xi}(1)t}\mathbb{P}_{(z,y')}\Big(Z_t>0,\  \overline{\xi}_{t} <0\Big) \leq  t^{1/2}e^{-\Phi_{\xi}(1)t}\mathbb{P}_{z}(Z_t>0) \\ &\leq  (1+\epsilon) (t-\delta)^{1/2} e^{-\Phi_{\xi}(1)(t-\delta)}\mathbb{P}_{(z,y')}\Big(Z_{t-\delta}>0,\ \overline{\xi}_{t-\delta} < 0\Big)\frac{t^{1/2}e^{-\Phi_{\xi}(1)t}}{(t-\delta)^{1/2} e^{-\Phi_{\xi}(1)(t-\delta)}}.
\end{split}\]
Now, appealing to Lemma \ref{lem_inter_cota1}, we  have 
\[\begin{split}
	\lim\limits_{t\to\infty} 	t^{1/2} e^{-\Phi_{\xi}(1)t}\mathbb{P}_{(z,y')}\Big(Z_t>0, \ \overline{\xi}_t< 0\Big)    = z  \sqrt{\frac{2}{\pi \Phi_\xi''(1)}} \mathbb{E}^{(e,1)}\big[H_1\big]\mathfrak{b}_2(y'),
\end{split}\]
where 
\begin{equation}
	\mathfrak{b}_2(y')=U^{(1)}(-y') \mathbf{E}^{(1), \uparrow}_{-y'}\left[\int_{0}^{\infty}\exp\left\{-\int_{-\infty}^{0}\psi_0'\big(h_{s,0}^{\Xi}(\lambda)\big)\mathrm{d} s\right\}\mathrm{d} \lambda\right].
\end{equation}
Hence, we obtain 
\[\begin{split}
	(1-\epsilon) z  \sqrt{\frac{2}{\pi \Phi_\xi''(1)}}  \mathbb{E}^{(e,1)}\big[H_1\big]\mathfrak{b}_2(y') \leq \lim\limits_{t \to \infty} & t^{1/2}e^{-t\Phi_{\xi}(1)} \mathbb{P}_{z}(Z_t>0) \\ &\leq (1+\epsilon) z  \sqrt{\frac{2}{\pi \Phi_\xi''(1)}} \mathbb{E}^{(e,1)}\big[H_1\big]\mathfrak{b}_2(y')e^{-\Phi_{\xi}(1)\delta}.
\end{split}\]
On the other hand, we observe that $y'$ is a sequence which may depend on $\epsilon$ . Further, this sequence $y'$ goes to minus  infinity as $\epsilon$ goes to 0.  Then, for any sequence $y'=y_{\epsilon}$, we deduce that 
\[\begin{split}
	0<	(1-\epsilon)z  \sqrt{\frac{2}{\pi \Phi_\xi''(1)}}  \mathbb{E}^{(e,1)}\big[H_1\big]&\mathfrak{b}_2(y_\epsilon)  \leq \lim\limits_{t \to \infty} t^{1/2}e^{-\Phi_{\xi}(1)t}\mathbb{P}_{z}(Z_t>0) \\ & \leq (1+\epsilon) z  \sqrt{\frac{2}{\pi \Phi_\xi''(1)}}  \mathbb{E}^{(e,1)}\big[H_1\big]\mathfrak{b}_2(y_\epsilon) < \infty.
\end{split}\]
Therefore, by  letting $\epsilon\to 0$, we get
\[\begin{split}
	0<	\liminf_{\epsilon \to 0} (1-\epsilon) &z  \sqrt{\frac{2}{\pi \Phi_\xi''(1)}}  \mathbb{E}^{(e,1)}\big[H_1\big]\mathfrak{b}_2(y_\epsilon)  \leq\lim\limits_{t \to \infty} t^{1/2}e^{-\Phi_{\xi}(1)t}\mathbb{P}_{z}(Z_t>0)\\  &\leq  \limsup_{\epsilon \to 0}(1+\epsilon)  z  \sqrt{\frac{2}{\pi \Phi_\xi''(1)}}  \mathbb{E}^{(e,1)}\big[H_1\big]\mathfrak{b}_2(y_\epsilon) e^{-\Phi_{\xi}(1)\delta }< \infty.
\end{split}\]
Since $\delta$ can be taken arbitrary close to 0,  we deduce 
\begin{equation*}
	\lim\limits_{t \to \infty} t^{1/2}e^{-\Phi_{\xi}(1)t} \mathbb{P}_{z}(Z_t>0)=z  \sqrt{\frac{2}{\pi \Phi_\xi''(1)}}  \mathbb{E}^{(e,1)}\big[H_1\big]\mathfrak{B}_2,
\end{equation*}
where
$$ \mathfrak{B}_2:=\lim\limits_{\epsilon \to 0} \mathfrak{b}_2(y_\epsilon)= \lim_{\epsilon \to 0} U^{(1)}(-y_\epsilon) \mathbf{E}^{(1), \uparrow}_{-y_\epsilon}\left[\int_{0}^{\infty}\exp\left\{-\int_{-\infty}^{0}\psi_0'\big(h_{s,0}^{\Xi}(\lambda)\big)\mathrm{d} s\right\}\mathrm{d} \lambda\right]. $$
The proof is now complete.

\end{proof}
\subsection{The $Q$ process}

\begin{proof}[Proof of Theorem \ref{teo:Qprocess}]
	We first prove part (i). We only deduce it for the strongly subcritical regime, for the intermediate subcritical regime the arguments are basically the same. Let $z, t>0$ and $\Lambda\in \mathcal{F}_t$, from the Markov property, we obtain
	\begin{equation*}
		\mathbb{P}_z\big(\Lambda\ |\ T_0>t+s\big) = \mathbb{E}_z\left[\mathbf{1}_{\{\Lambda, T_0>t\}}\frac{\mathbb{P}_{Z_t}(T_0>s)}{\mathbb{P}_z(T_0>t+s)}\right].
	\end{equation*}
	From Theorem \ref{theo_stronglycase}, for any $\epsilon>0$ and $t$ large enough, we deduce that
	\begin{equation*}
		\frac{\mathbb{P}_{Z_t}(T_0>s)}{\mathbb{P}_z(T_0>t+s)} = \frac{e^{-\Phi_\xi(1)s}\mathbb{P}_{Z_t}(Z_s>0)e^{-\Phi_\xi(1)t}}{e^{-\Phi_\xi(1)(t+s)}(Z_{t+s}>0)}\leq e^{-\Phi_\xi(1)t}\left(\frac{\epsilon + Z_t \mathfrak{B}_1}{-\epsilon + z \mathfrak{B}_1}\right).
	\end{equation*}
	Further, from \eqref{martingquenched}, we have  
	\begin{equation*}
		e^{-\Phi_\xi(1)t} \mathbb{E}_z[Z_t\ |\ S] = z e^{\xi_t - \Phi_\xi(1)t},
	\end{equation*}
	where the random variable in the right-hand side above is integrable thanks to our exponential moment condition \eqref{eq_moments} with $\vartheta=1$. Hence,  the Dominated Convergence Theorem implies that
	\begin{equation*}
		\begin{split}
			\lim\limits_{s\to \infty} 	\mathbb{P}_z\big(\Lambda\ |\ T_0>t+s\big) &= \mathbb{E}_z\left[\mathbf{1}_{\{\Lambda, T_0>t\}}\lim\limits_{s\to \infty} \frac{\mathbb{P}_{Z_t}(T_0>s)}{\mathbb{P}_z(T_0>t+s)}\right] \\ &= \mathbb{E}_z\left[\mathbf{1}_{\{\Lambda, T_0>t\}}\frac{Z_t}{z} e^{-\Phi_\xi(1)t}\right] = \mathbb{E}_z\left[\frac{Z_t}{z} e^{-\Phi_\xi(1)t}\mathbf{1}_{\Lambda}\right].
		\end{split}
	\end{equation*}
	We now prove part (ii). The fact that the process $(e^{-\Phi_\xi(1)t}Z_t, \ t\geq 0)$ is a martingale follows directly from  \eqref{martingquenched} by applying the Markov property as follows: for $0\leq s\leq t$, 
	\begin{equation*}
		\mathbb{E}_z\big[e^{-\Phi_\xi(1)(t+s)}Z_{t+s}\  |\  \mathcal{F}_{s}\big]= e^{-\Phi_\xi(1)(t+s)}\mathbb{E}_{Z_s}[Z_t] = e^{-\Phi_\xi(1)(t+s)} Z_s \mathbb{E}[e^{\xi_t}] =  e^{-\Phi_\xi(1) s} Z_s,
	\end{equation*}
	which establishes the martingale property.

To deduce part (iii), we compute the Laplace transform of $Z$, under $\mathbb{P}^\natural_z$. Fix $z, t>0$ and $\lambda \geq 0$,  using  \eqref{eq_transLaplace} and part (ii), we get
	\begin{equation*}
		\begin{split}
			\mathbb{E}^{\natural}_z\Big[e^{-\lambda Z_t}\Big] &= \mathbb{E}_z\left[\frac{Z_t}{z}e^{-\Phi_\xi(1)t}e^{-\lambda Z_t}\right] = -\frac{e^{-\Phi_\xi(1)t}}{z} \frac{\mathrm{d}}{\mathrm{d} \lambda}\mathbb{E}_z\Big[e^{-\lambda Z_t}\Big]\\ &= -\frac{e^{-\Phi_\xi(1)t}}{z} \frac{\mathrm{d}}{\mathrm{d} \lambda}\mathbb{E}^{(e)}\Big[\exp\{-zh_{0,t}(\lambda)\}\Big].
		\end{split}
	\end{equation*}
	Now, note that
	\begin{equation}
		\frac{\mathrm{d}}{\mathrm{d} \lambda}\mathbb{E}^{(e)}\Big[\exp\{-zh_{0,t}(\lambda)\}\Big] =- z\mathbb{E}^{(e)}\left[\exp\{-zh_{0,t}(\lambda)\}e^{\xi_t}\frac{\mathrm{d} }{\mathrm{d}u}v_t(0, u,\xi)\Big|\Big._{u=\lambda e^{\xi_t}}\right].
	\end{equation}
	Thus, from \eqref{eq:derivatev}, we deduce 
	\begin{equation*}
		\begin{split}
			\mathbb{E}^{\natural}_z\Big[e^{-\lambda Z_t}\Big] &= e^{-\Phi_\xi(1)t} \mathbb{E}^{(e)}\left[\exp\{-zh_{0,t}(\lambda)\}e^{\xi_t}\frac{\mathrm{d} }{\mathrm{d}u}v_t(0, u,\xi)\Big|\Big._{u=\lambda e^{\xi_t}}\right]\\ &= e^{-\Phi_\xi(1)t}\mathbb{E}^{(e)}\left[e^{\xi_t}\exp\left\{-zh_{0,t}(\lambda) -\int_{0}^{t}\psi_0'(h_{s,t}(\lambda))\mathrm{d} s\right\}\right] \\ &= \mathbb{E}^{(e,1)}\left[\exp\left\{-zh_{0,t}(\lambda) -\int_{0}^{t}\psi_0'(h_{s,t}(\lambda))\mathrm{d} s\right\}\right],
		\end{split}
	\end{equation*}
	where in the last equality we have used the definition of the Esscher transform given in \eqref{eq_medida_Essher}. This completes the proof. 
\end{proof}

\textbf{Acknowledgements}:  N.C.-T. acknowledges support from CONACyT-MEXICO grant no. 636133.

\bibliographystyle{abbrv}
\bibliography{references}

\begin{thebibliography}{10}

\bibitem{afanasyev1980}
V.~I. Afanasyev.
\newblock Limit theorems for a conditional random walk and some applications.
\newblock {\em Diss. Cand. Sci., MSU, Moscow,}, 1980.

\bibitem{afanasyev2014conditional}
V.~I. Afanasyev, C.~B\"{o}inghoff, G.~Kersting, and V.~A. Vatutin.
\newblock Conditional limit theorems for intermediately subcritical branching
  processes in random environment.
\newblock {\em Ann. Inst. Henri Poincar\'{e} Probab. Stat.}, 50(2):602--627,
  2014.

\bibitem{afanasyevstrongly}
V.~I. Afanasyev, J.~Geiger, G.~Kersting, and V.~A. Vatutin.
\newblock Functional limit theorems for strongly subcritical branching
  processes in random environment.
\newblock {\em Stochastic Process. Appl.}, 115(10):1658--1676, 2005.

\bibitem{bansaye2021extinction}
V.~Bansaye, J.~C. Pardo, and C.~Smadi.
\newblock Extinction rate of continuous state branching processes in critical
  {L}\'{e}vy environments.
\newblock {\em ESAIM Probab. Stat.}, 25:346--375, 2021.

\bibitem{bansaye2013extinction}
V.~Bansaye, J.~C. Pardo~Millan, and C.~Smadi.
\newblock On the extinction of continuous state branching processes with
  catastrophes.
\newblock {\em Electron. J. Probab.}, 18:no. 106, 31, 2013.

\bibitem{bansaye2015scaling}
V.~Bansaye and F.~Simatos.
\newblock On the scaling limits of {G}alton-{W}atson processes in varying
  environments.
\newblock {\em Electron. J. Probab.}, 20:no. 75, 36, 2015.

\bibitem{bertoin1996levy}
J.~Bertoin.
\newblock {\em L\'{e}vy processes}, volume 121 of {\em Cambridge Tracts in
  Mathematics}.
\newblock Cambridge University Press, Cambridge, 1996.

\bibitem{boeinghoff2011branching}
C.~B\"{o}inghoff and M.~Hutzenthaler.
\newblock Branching diffusions in random environment.
\newblock {\em Markov Process. Related Fields}, 18(2):269--310, 2012.

\bibitem{chaumont1996conditionings}
L.~Chaumont.
\newblock Conditionings and path decompositions for {L}\'{e}vy processes.
\newblock {\em Stochastic Process. Appl.}, 64(1):39--54, 1996.

\bibitem{chaumont2005levy}
L.~Chaumont and R.~A. Doney.
\newblock On {L}\'{e}vy processes conditioned to stay positive.
\newblock {\em Electron. J. Probab.}, 10:no. 28, 948--961, 2005.

\bibitem{dekking1987}
F.~M. Dekking.
\newblock On the survival probability of a branching process in a finite state
  i.i.d. environment.
\newblock {\em Stochastic Process. Appl.}, 27(1):151--157, 1987.

\bibitem{doney2007fluctuation}
R.~A. Doney.
\newblock {\em Fluctuation theory for {L}\'{e}vy processes}, volume 1897 of
  {\em Lecture Notes in Mathematics}.
\newblock Springer, Berlin, 2007.

\bibitem{folland}
G.~B. Folland.
\newblock {\em Real analysis}.
\newblock Pure and Applied Mathematics (New York). John Wiley \& Sons, Inc.,
  New York, 1984.

\bibitem{geiger2003limit}
J.~Geiger, G.~Kersting, and V.~A. Vatutin.
\newblock Limit theorems for subcritical branching processes in random
  environment.
\newblock {\em Ann. Inst. H. Poincar\'{e} Probab. Statist.}, 39(4):593--620,
  2003.

\bibitem{grey1974asymptotic}
D.~R. Grey.
\newblock Asymptotic behaviour of continuous time, continuous state-space
  branching processes.
\newblock {\em J. Appl. Probability}, 11:669--677, 1974.

\bibitem{he2018continuous}
H.~He, Z.~Li, and W.~Xu.
\newblock Continuous-state branching processes in {L}\'{e}vy random
  environments.
\newblock {\em J. Theoret. Probab.}, 31(4):1952--1974, 2018.

\bibitem{hirano2001levy}
K.~Hirano.
\newblock L\'{e}vy processes with negative drift conditioned to stay positive.
\newblock {\em Tokyo J. Math.}, 24(1):291--308, 2001.

\bibitem{kurtz1978diffusion}
T.~G. Kurtz.
\newblock Diffusion approximations for branching processes.
\newblock In {\em Branching processes ({C}onf., {S}aint {H}ippolyte, {Q}ue.,
  1976)}, volume~5 of {\em Adv. Probab. Related Topics}, pages 269--292.
  Dekker, New York, 1978.

\bibitem{kyprianou2014fluctuations}
A.~E. Kyprianou.
\newblock {\em Fluctuations of {L}\'{e}vy processes with applications}.
\newblock Universitext. Springer, Heidelberg, second edition, 2014.

\bibitem{lambert2007}
A.~Lambert.
\newblock Quasi-stationary distributions and the continuous-state branching
  process conditioned to be never extinct.
\newblock {\em Electron. J. Probab.}, 12:no. 14, 420--446, 2007.

\bibitem{li2018asymptotic}
Z.~Li and W.~Xu.
\newblock Asymptotic results for exponential functionals of {L}\'{e}vy
  processes.
\newblock {\em Stochastic Process. Appl.}, 128(1):108--131, 2018.

\bibitem{palau2017continuous}
S.~Palau and J.~C. Pardo.
\newblock Continuous state branching processes in random environment: the
  {B}rownian case.
\newblock {\em Stochastic Process. Appl.}, 127(3):957--994, 2017.

\bibitem{palau2018branching}
S.~Palau and J.~C. Pardo.
\newblock Branching processes in a {L}\'{e}vy random environment.
\newblock {\em Acta Appl. Math.}, 153:55--79, 2018.

\bibitem{palau2016asymptotic}
S.~Palau, J.~C. Pardo, and C.~Smadi.
\newblock Asymptotic behaviour of exponential functionals of {L}\'{e}vy
  processes with applications to random processes in random environment.
\newblock {\em ALEA Lat. Am. J. Probab. Math. Stat.}, 13(2):1235--1258, 2016.

\bibitem{patie2018bernstein}
P.~Patie and M.~Savov.
\newblock Bernstein-gamma functions and exponential functionals of {L}\'{e}vy
  processes.
\newblock {\em Electron. J. Probab.}, 23:Paper No. 75, 101, 2018.

\bibitem{ken1999levy}
K.-i. Sato.
\newblock {\em L\'{e}vy processes and infinitely divisible distributions},
  volume~68 of {\em Cambridge Studies in Advanced Mathematics}.
\newblock Cambridge University Press, Cambridge, 2013.

\bibitem{xu2021}
W.~Xu.
\newblock {Asymptotic results for heavy-tailed Lévy processes and their
  exponential functionals}.
\newblock {\em Bernoulli}, 27(4):2766 -- 2803, 2021.

\end{thebibliography}

\end{document}